\documentclass[leqno,11pt]{article}
\usepackage{amssymb,amsmath,amsthm,amsfonts}
\usepackage{verbatim}
\allowdisplaybreaks

\usepackage[utf8]{inputenc}

\setlength{\textwidth}{6.5in}
\setlength{\textheight}{9.in}
\setlength{\oddsidemargin}{-0.125in}
\setlength{\evensidemargin}{-0.125in}
\setlength{\topmargin}{-0.5in}

\theoremstyle{plain}
\newtheorem{theorem}{Theorem}
\newtheorem{lemma}[theorem]{Lemma}
\newtheorem{corollary}[theorem]{Corollary}
\newtheorem{proposition}[theorem]{Proposition}

\theoremstyle{definition}
\newtheorem{definition}[theorem]{Definition}

\newtheorem{example}[theorem]{Example}

\newtheorem{remark}[theorem]{Remark}

\newcommand{\eps}{\epsilon}
\newcommand{\ad}{\text{ad}}

\newcommand{\R}{\mathbb R}

\newcommand{\G}{\mathbb G}

\newcommand{\Al}{f^{A_l}(x_0)}
\newcommand{\ai}{f^{A_i}(x)}
\newcommand{\Aj}{f^{A_j}(x_0)}
\newcommand{\Ai}{f^{A_i}(x_0)}
\newcommand{\bmlk}{\beta_m^{lk}}
\newcommand{\al}{f^{A_l}(x)}
\newcommand{\aj}{f^{A_j}(x)}
\newcommand{\Bj}{f^{B_j}(x_0)}
\newcommand{\bj}{f^{B_j}(x)}
\newcommand{\Bk}{f^{B_k}(x_0)}
\newcommand{\akij}{\alpha_k^{ij}}
\newcommand{\akji}{\alpha_k^{ji}}
\newcommand{\bmij}{\beta_m^{ij}}
\newcommand{\bk}{f^{B_k}(x)}
\newcommand{\pt}{\text{pt}}

\newcommand{\bmck}{\beta_m^{ck}}
\newcommand{\akcb}{\alpha_k^{cb}}
\newcommand{\akbc}{\alpha_k^{bc}}

\newcommand{\akab}{\alpha_k^{ab}}
\newcommand{\bmak}{\beta_m^{ak}}
\newcommand{\akca}{\alpha_k^{ca}}
\newcommand{\bmbk}{\beta_m^{bk}}
\newcommand{\skos}{\sum_{k=1}^s}
\newcommand{\smot}{\sum_{m=1}^t}

\newcommand{\akjd}{\alpha_k^{jd}}

\newcommand{\akcd}{\alpha_k^{cd}}
\newcommand{\akjc}{\alpha_k^{jc}}

\newcommand{\bmdk}{\beta_m^{dk}}
\newcommand{\ddcm}{\frac{d}{dC_m}}
\newcommand{\ddbk}{\frac{d}{dB_k}}

\numberwithin{theorem}{section} \numberwithin{equation}{section}

\title{A variant of Gromov's problem on H\"{o}lder equivalence  of Carnot groups}
\author{Derek Jung\footnote{Supported  by U.S. Department of Education GAANN fellowship P200A150319. \hfill\break {\it Key Words and Phrases:} sub-Riemannian geometry, Carnot groups, H\"older mappings, geometric measure theory, jet spaces, Gromov conjecture, unrectifiability \hfill\break {\it 2010 Mathematics Subject Classification:} Primary 53C17, 22E25; Secondary 53C23, 26B35, 49Q15,  58A20} \\ Department of Mathematics \\ University of Illinois at Urbana-Champaign \\ 1409 West Green St. \\ Urbana, IL 61801 \\ {\tt djjung2@illinois.edu}}
\date{\today}

\setcounter{tocdepth}{1}

\begin{document}

\raggedbottom

\maketitle

\begin{abstract}
It is unknown if there exists a locally $\alpha$-H\"older homeomorphism $f:\R^3\to \mathbb{H}^1$ for any $\frac{1}{2}< \alpha\le \frac{2}{3}$, although the identity map $\R^3\to \mathbb{H}^1$ is locally $\frac{1}{2}$-H\"older. More generally, Gromov asked: Given $k$ and a Carnot group $G$, for which $\alpha$ does there exist a locally $\alpha$-H\"older homeomorphism $f:\R^k\to G$? Here, we equip a Carnot group $G$ with  the Carnot-Carath\'eodory metric. In 2014, Balogh, Haj{\l}asz, and Wildrick considered a variant of this problem. These authors proved that if $k>n$, there does not exist an  injective, $(\frac{1}{2}+)$-H\"older mapping  $f:\R^k\to \mathbb{H}^n$  that is also locally Lipschitz as a mapping into $\R^{2n+1}$. For their proof, they use the fact that $\mathbb{H}^n$ is purely $k$-unrectifiable for $k>n$. In this paper, we will extend their result from the Heisenberg group to model filiform groups and Carnot groups of step at most three. We will now require that the Carnot group is  purely $k$-unrectifiable. The main key to our proof will be showing that  $(\frac{1}{2}+)$-H\"older maps $f:\mathbb{R}^k\to G$ that are  locally Lipschitz into Euclidean space, are weakly contact. Proving weak contactness in these two settings requires understanding the relationship between the  algebraic  and metric structures of the Carnot group. We will use coordinates of the first and second kind for Carnot groups.
\end{abstract}

\tableofcontents
 
\setcounter{section}{0}
\setcounter{subsection}{0}

\section{Introduction}

A Lie algebra $\mathfrak{g}$ is said to have an $r$-step \textbf{stratification} if
\[
\mathfrak{g}= \mathfrak{g}_1\oplus\mathfrak{g}_2\oplus \cdots \oplus \mathfrak{g}_r,
\]
where $\mathfrak{g}_1\subseteq \mathfrak{g}$ is a subspace, $\mathfrak{g}_{j+1} = [\mathfrak{g}_1,\mathfrak{g}_j]$ for all $j=1,\ldots , r-1$, and $[\mathfrak{g}, \mathfrak{g}_r] =0$. 
A \textbf{Carnot group}  is a connected, simply-connected, nilpotent Lie group with a stratified Lie algebra. 
If the Lie algebra of a Carnot group $G$ admits an $r$-step stratification, then we will say $G$ is step $r$. 
Each Carnot group can be identified with a Euclidean space equipped with a metric structure and a group operation arising from its Lie algebra structure.

It is natural to ask the following general question:
\begin{center}
When are two Carnot  groups equivalent?
\end{center}

In \cite{MDC:P}, Pansu proved that two Carnot groups are biLipschitz homeomorphic if and only if they are isomorphic. 
With the problem of biLipschitz equivalence  somewhat well-understood,  we can go on to ask when two Carnot groups are H\"older equivalent.

In \cite{CCS:G}, Gromov considered the problem of H\"older equivalence of Carnot groups:
If a Carnot group $G$ is identified with $\R^n$ equipped with a group operation, for which $\alpha$ does there exist a locally $\alpha$-H\"older homeomorphism $f:\R^n\to G$?
If such $\alpha$ exist, what is the supremum of the set of such $\alpha$? 
Here, we do not require any regularity of $f^{-1}$ beyond continuity.

Before we discuss past work on this problem, we will comment on the notation that will be used throughout this paper. 
We will simply write $\R^n$ to denote Euclidean space equipped with addition and the standard Euclidean metric.
\emph{We will write $(\R^n,\cdot)$ to denote a Carnot group equipped with coordinates of the first or second kind and with the Carnot-Carath\'eodory metric.}
When we equip a Carnot group with coordinates of the first or second kind, it is implied that we are taking coordinates with respect to a basis compatible with the stratification of its Lie algebra. 
We will introduce these two systems of coordinates and the Carnot-Carath\'eodory metric for  Carnot groups in section 2.
In section 3, we will discuss  coordinates of the second kind for a class of jet spaces: the model filiform groups.
We will begin section 4 by  looking at the geometry of Carnot groups of step at most three.

Nagel, Stein, and Wenger \cite[Proposition 1.1]{NSW:BAM} proved the existence of $\alpha$ as above: 
\begin{proposition}\label{holder}
Let $(\R^n,\cdot)$ be a step $r$ Carnot group.
Then $\text{id}:\R^n \to (\R^n,\cdot)$ is locally $\frac{1}{r}$-H\"older  
and $\text{id}:(\R^n,\cdot)\to \R^n$ is locally Lipschitz.
\end{proposition}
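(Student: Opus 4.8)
The plan is to reduce the statement to a single two-sided ``ball--box'' estimate at the group identity $e$ and then spread it to every point by left translation. Write a point of the Carnot group in stratified coordinates as $p=(x^{(1)},\dots,x^{(r)})$, where $x^{(j)}$ is the block of coordinates attached to $\mathfrak g_j$, let $|\cdot|$ denote the Euclidean norm, and put $\|p\|:=\sum_{j=1}^{r}|x^{(j)}|^{1/j}$. The heart of the argument will be to prove
\[
d_{cc}(e,p)\ \asymp\ \|p\|
\]
with constants depending only on $G$; from this the two local statements fall out quickly.

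For the upper bound $d_{cc}(e,p)\lesssim\|p\|$ I would exhibit an explicit horizontal curve from $e$ to $p$. Fixing a basis of the horizontal layer $\mathfrak g_1$, one produces displacement in a layer-$\mathfrak g_j$ direction by iterated commutator (``zig--zag'') loops among these horizontal generators: a loop of total horizontal length $\approx s$ moves a point by $\approx s^{j}$ along a prescribed bracket of length $j$. Concatenating such loops over $j=1,\dots,r$ to install the blocks $x^{(1)},\dots,x^{(r)}$ one at a time gives an admissible curve of total length $\lesssim\sum_{j}|x^{(j)}|^{1/j}=\|p\|$. This construction is the one genuinely computational point and is \emph{the main obstacle}: once it is available the remaining steps are soft. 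In particular it already shows that $\rho:=d_{cc}(e,\cdot)$ is continuous for the Euclidean topology and vanishes only at $e$.

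For the lower bound I would use the dilations $\delta_{t}$, which scale $x^{(j)}$ by $t^{j}$ and scale $d_{cc}$ by $t$, so that both $\|\cdot\|$ and $\rho$ are homogeneous of degree $1$. If $d_{cc}(e,p)\gtrsim\|p\|$ were to fail there would be points $p_{k}\ne e$ with $\rho(p_{k})/\|p_{k}\|\to0$; after replacing $p_{k}$ by $\delta_{1/\|p_{k}\|}(p_{k})$ one may assume $\|p_{k}\|=1$, which forces $|x^{(j)}(p_{k})|\le1$ for every $j$, so the $p_{k}$ lie in a Euclidean-compact set. A subsequential Euclidean limit $q$ would then satisfy $\|q\|=1$ but $\rho(q)=\lim_{k}\rho(p_{k})=0$ by continuity of $\rho$, hence $q=e$, contradicting $\|e\|=0$. (Alternatively this normalization may simply be quoted from \cite{NSW:BAM}.)

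Granting $d_{cc}(e,p)\asymp\|p\|$, the estimates at $e$ are immediate: if $|p|\le1$ then $|x^{(j)}|^{1/j}\le|p|^{1/j}\le|p|^{1/r}$, so $d_{cc}(e,p)\lesssim|p|^{1/r}$; and if $\rho(p)\le1$ then $|x^{(j)}|\le\big(C\rho(p)\big)^{j}\le C^{j}\rho(p)$, so $|p|\lesssim d_{cc}(e,p)$. To globalize I would invoke left-invariance, $d_{cc}(p,q)=d_{cc}(e,p^{-1}q)$, together with the fact that in coordinates of the first or second kind the multiplication and inversion maps are polynomial by Baker--Campbell--Hausdorff; hence $(p,q)\mapsto p^{-1}q$ is smooth and equals $e$ on the diagonal, and $(p,w)\mapsto p\cdot w$ is a diffeomorphism in $w$. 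Fixing $p_{0}$ and a small enough neighborhood $U$, this gives $|p^{-1}q|\lesssim|p-q|$ and $|p-q|\lesssim|p^{-1}q|$ for $p,q\in U$, with $p^{-1}q$ arbitrarily close to $e$; combining with the estimates at $e$ yields $d_{cc}(p,q)\le C|p-q|^{1/r}$ and $|p-q|\le C\,d_{cc}(p,q)$ on $U$, which is exactly the assertion that $\text{id}:\R^{n}\to(\R^{n},\cdot)$ is locally $\tfrac1r$-H\"older and $\text{id}:(\R^{n},\cdot)\to\R^{n}$ is locally Lipschitz.
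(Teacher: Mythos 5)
The paper does not actually prove this proposition: it states it with a citation and then \emph{derives} the Ball--Box Theorem (Theorem \ref{bb-theorem}) and Corollary \ref{carnot-bb-cor} from it. Your argument therefore runs in the opposite logical direction --- you prove the two-sided ball--box estimate $d_{cc}(e,p)\asymp\|p\|$ at the identity first and read the H\"older and Lipschitz statements off from it. That is the standard route, and your reduction is sound: homogeneity under $\delta_t$, left-invariance, and the fact that $(p,q)\mapsto p^{-1}q$ is polynomial (hence locally biLipschitz onto a neighborhood of $e$, with $|p-q|\asymp|p^{-1}q|$ uniformly on compacta) correctly convert the estimate at $e$ into the two local statements, and the exponent bookkeeping ($|x^{(j)}|^{1/j}\le|p|^{1/r}$ for $|p|\le1$, and $|x^{(j)}|\le (C\rho(p))^{j}\le C^{r}\rho(p)$ for $\rho(p)\le1$) is right.

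Two points deserve flagging. First, the upper bound via commutator loops is the entire analytic content of the proposition, and you assert it rather than prove it. To close it you must run the induction up the strata: a layer-$j$ loop is closed in the coordinates of layers $<j$ but does perturb layers $>j$, so one installs the blocks in increasing order of $j$ and checks that the perturbations created in layer $j'>j$ are again $O(\|p\|^{j'})$, so that the correction process terminates with total horizontal length $\lesssim\|p\|$. Second, your compactness argument for the lower bound concludes from $\rho(q)=0$ that $q=e$; that is the nondegeneracy of $d_{cc}$ and does not follow from the upper bound or from continuity of $\rho$. It requires the separate (easy) observation that the horizontal frame is Euclidean-bounded on Euclidean-compact sets, so a horizontal curve of length $L$ issuing from $e$ stays within Euclidean distance $CL$ of $e$ while it remains in such a set. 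Worth noting: that observation alone already yields $|p|\le C\,d_{cc}(e,p)$ locally, i.e.\ the entire Lipschitz half of the proposition, without any normalization or compactness argument; only the $\tfrac1r$-H\"older half genuinely needs the loop construction.
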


On the other hand,  Gromov \cite[Section 4]{CCS:G} used an  isoperimetric inequality for Carnot groups \cite{TDP:V} to prove  that if there exists a locally $\alpha$-H\"older homeomorphism $f:\R^n\to (\R^n,\cdot)$, then
\[
\alpha \le \frac{n-1}{Q-1}.
\]
Here, $Q$ denotes the Hausdorff dimension of $(\R^n,\cdot)$ with respect to its cc-metric.

Beyond these results, little is known about this problem. 
For example, in the case of the first Heisenberg group, the supremum of $\alpha$ for which there exists a locally $\alpha$-H\"older homeomorphism $f:\R^3 \to \mathbb{H}^1$  is   only known to lie between $1/2$ and $2/3$ \cite[Page 3]{SAD:P}. 

In this paper, we will consider a related problem. 
We first define a class of maps related to the class $C^{0,\alpha}(X;Y)$ of  $\alpha$-H\"older maps $f:X\to Y$.

\begin{definition}
Fix metric spaces $(X,d_X), (Y,d_Y)$ and $\alpha>0$. We say a map $f:X\to Y$ is of class $C^{0,\alpha+}(X;Y)$ if there exists 
a homeomorphism  $\beta:[0,\infty)\to [0,\infty)$  such that
\begin{equation}\label{calpha-ineq}
d_Y(f(a),f(b)) \le d_X(a,b)^\alpha  \beta(d_X(a,b)) \quad \text{for all } a,b\in X.
\end{equation}
We will sometimes simply write $C^{0,\alpha+}$ if the domain and target are clear. 
\end{definition}


\begin{remark}\label{alphapluslimit}
Suppose $X,$ $Y$ are metric spaces with $X$ bounded.
It is easy to check that
\[
C^{0,\eta}(X;Y) \subseteq C^{0,\alpha+}(X;Y) \subseteq C^{0,\alpha}(X;Y).
\]
whenever $0<\alpha<\eta$. 
Thus, $C^{0,\alpha+}(X;Y)$ can thought of as  a right limit of H\"older spaces.
\end{remark}

For certain models of model filiform groups and Carnot groups of small step, we will prove that there do not exist $(\alpha+)$-H\"{o}lder equivalences for $\alpha\ge 1/2$. 
Before stating our paper's two main results, we make the following definition.

\begin{definition}
A Carnot group $(\R^n,\cdot)$ is said to be \textbf{purely $k$-unrectifiable} if for every $A\subseteq \R^k$ and Lipschitz map $f:A \to (\R^n,\cdot)$, we have \[\mathcal{H}_{cc}^k(f(A))=0.\] Here,  we endow $(\R^n,\cdot)$ with the Carnot-Carath\'eodory metric to be described in subsection \ref{path-subsection}.
\end{definition}

Ambrosio and Kirchheim proved that  $\mathbb{H}^1$ is purely $k$-unrectifiable for $k=2,3,4$ \cite[Theorem 7.2]{RS:AK}.
More generally, Magnani proved that a Carnot group is purely $k$-unrectifiable if and only if its horizontal layer does not contain a Lie subalgebra of dimension $k$ \cite[Theorem 1.1]{UAR:M}.
In particular, $\mathbb{H}^n$ is purely $k$-unrectifiable for all $k>n$. 
In 2014, Balogh, Haj{\l}asz, and Wildrick provided a different proof of this last result by using approximate derivatives and a weak contact condition  \cite[Theorem 1.1]{WCE:BHW}. 
In the process, they prove that a Lipschitz mapping of an open subset of $\R^k$, $k>n$, into $\mathbb{H}^n$  has an approximate derivative that is horizontal almost everywhere.

Motivated by Gromov's H\"older equivalence problem, Balogh, Haj{\l}asz, and Wildrick  go on to prove that one cannot embed $\R^k$, $k>n$, into $\mathbb{H}^n$ via a sufficiently regular $(\alpha+)$-H\"older mapping.
More specifically, they prove that if $k>n$ and $\Omega \subseteq \R^k$ is open, then there is no injective mapping of class $C^{0,\frac{1}{2}+}(\Omega, \mathbb{H}^n)$ that is locally Lipschitz as a mapping into $\R^{2n+1}$ \cite[Theorem 1.11]{WCE:BHW}. 
The main key to their proof  is showing that if such a map existed, then it would have to be horizontal almost everywhere.
Notice that Remark \ref{alphapluslimit} combined with the identity map $\text{id}:\R^3\to \mathbb{H}^1$ being locally $\frac{1}{2}$-H\"older suggest that this result is sharp except for the extra local Lipschitz assumption.

In this paper, we will extend the  result in the previous paragraph to more general Carnot groups, specifically model filiform groups and Carnot groups of step at most three. 
The model filiform groups can be realized as the class of jet spaces $J^k(\R)$. In these groups, there are  few nontrivial bracket relations relative to the step.
For Carnot groups of small step, the Baker-Campbell-Hausdorff has a simple form; this allows one to  describe the  structure (e.g., left-invariant vector fields and contact forms) of the Carnot group in coordinates and perform  computations.
The Lie algebraic properties of these two classes of Carnot group make them ideal settings to generalize  the result from the previous paragraph.
The proofs for these Carnot groups will again boil down to showing the almost everywhere horizontality of certain $C^{0,\frac{1}{2}+}$ mappings  into these groups.

The standard basis $\{e^{(k)},e_k,\ldots , e_0\}$ of $Lie(J^k(\R))$ is such that $[e_j,e^{(k)}] = e_{j-1}$, $j\ge 1$, are the only nontrivial bracket relations. 
We will equip $J^k(\R)$ with coordinates of the first and second kind with respect to this basis.
For example, $J^1(\R)$ is isomorphic to $\mathbb{H}^1$. 
This will be discussed further in subsection \ref{jkr-carnot-subs}. 
It is implied that $J^k(\R)$ is equipped with either one of the two systems of coordinates in the following result, the first of our two main theorems.



\begin{theorem}\label{mf-main-theorem}
Fix $\alpha\ge \frac{1}{2} $ and positive integers $n, k$ with $n>1$. Let  $\Omega$ be an open subset of $\R^n$.
Then there is no injective mapping in the class
$C^{0,\alpha+}(\Omega; J^k(\R))$ that is also locally Lipschitz when considered as a map into $\R^{k+2}$.
\end{theorem}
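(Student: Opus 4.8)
The strategy is to adapt the argument of Balogh--Haj{\l}asz--Wildrick \cite[Theorem 1.11]{WCE:BHW} from $\mathbb{H}^n$ to $J^k(\R)$, the new ingredient being that the relevant weak contactness holds for maps into $J^k(\R)$. First observe that $J^k(\R)$ has topological dimension $k+2$, and that in the standard basis its horizontal layer is $\mathfrak g_1=\operatorname{span}(e^{(k)},e_k)$, a two-dimensional subspace. Since $[e_k,e^{(k)}]=e_{k-1}\notin\mathfrak g_1$, the space $\mathfrak g_1$ is not a subalgebra, and because $\dim\mathfrak g_1=2$ it contains no Lie subalgebra of dimension $\geq 2$. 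By Magnani's criterion \cite[Theorem 1.1]{UAR:M}, $J^k(\R)$ is therefore purely $n$-unrectifiable for every $n\geq 2$; this is precisely where the hypothesis $n>1$ enters.

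Suppose, toward a contradiction, that $f\in C^{0,\alpha+}(\Omega;J^k(\R))$ is injective and locally Lipschitz as a map into $\R^{k+2}$, with $\alpha\geq\frac12$. Working locally, shrink $\Omega$ to an open ball on which $f$ is globally $L$-Lipschitz into $\R^{k+2}$; by Remark \ref{alphapluslimit} and $\alpha\geq\frac12$ we may also assume $f\in C^{0,\frac12+}$ on this ball. The central step is to invoke the paper's weak contactness theorem: every such $f$ is weakly contact. Combined with Rademacher's theorem (applicable since $f$ is Lipschitz into Euclidean space), this shows that the classical differential $Df(x)$ exists and maps $\R^n$ into the horizontal subspace $H_{f(x)}J^k(\R)$ for a.e.\ $x$.

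Next I would upgrade this to metric regularity. By Fubini, for almost every line segment $\sigma$ in the ball the restriction $f\circ\sigma$ is absolutely continuous with horizontal velocity a.e., hence is a horizontal curve whose Carnot--Carath\'eodory length is at most $L\cdot\operatorname{length}(\sigma)$; a routine argument then shows $f$ is locally Lipschitz as a map into $(J^k(\R),d_{cc})$. Pure $n$-unrectifiability now gives $\cH^n_{cc}(f(\Omega'))=0$ for every $\Omega'\Subset\Omega$, and since $\operatorname{id}\colon(J^k(\R),d_{cc})\to\R^{k+2}$ is locally Lipschitz by Proposition \ref{holder}, also $\cH^n(f(\Omega'))=0$ with respect to the Euclidean metric.

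Finally, the area formula applied to the Euclidean-Lipschitz map $f|_{\Omega'}$, together with injectivity (every fibre has at most one point), yields $\int_{\Omega'}J_nf\,d\mathcal L^n=\cH^n(f(\Omega'))=0$, so $Df$ is rank-deficient almost everywhere on $\Omega'$; on the other hand, an injective locally Lipschitz map defined on an open subset of $\R^n$ cannot have an almost everywhere rank-deficient differential (cf.\ the proof of \cite[Theorem 1.11]{WCE:BHW}), which is the desired contradiction. The essential difficulty is the weak contactness step, which must be established separately for the model filiform groups and is where the algebraic structure of $J^k(\R)$ really enters; within the present argument the delicate points are the Fubini passage to genuine $d_{cc}$-Lipschitzness and pinning down the final rank/injectivity obstruction.
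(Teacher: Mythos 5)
Your proposal is correct and follows essentially the same route as the paper: cite the weak contactness result (Proposition \ref{mf-diff-prop}) for $C^{0,\frac12+}$ maps that are Euclidean-Lipschitz, upgrade via the Fubini argument to Lipschitzness into $(J^k(\R),d_{cc})$ (this is Proposition \ref{carnot-lip-weak-prop}), use Magnani's criterion to get pure $n$-unrectifiability and hence $\mathcal H^n_{cc}(f(\Omega'))=0$, push forward to Euclidean $\mathcal H^n(f(\Omega'))=0$, and contradict injectivity; this is exactly the chain Proposition \ref{mf-diff-prop} $\Rightarrow$ Proposition \ref{weakly-contact-prop} in the paper. The one place you diverge is the final contradiction: the paper goes directly from $\mathcal H^n(f(\Omega'))=0$ to a topological-dimension contradiction (a set homeomorphic to a closed $n$-cube has topological dimension $n$, while null $\mathcal H^n$-measure forces dimension at most $n-1$), whereas you insert an area-formula step to conclude $Df$ is a.e.\ rank-deficient and then invoke the fact that an injective locally Lipschitz map cannot have a.e.\ rank-deficient differential. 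That fact is true, but its standard proof is precisely the measure-zero-image plus topological-dimension argument, so your detour is redundant rather than wrong; you could delete the area-formula step and argue directly from $\mathcal H^n(f(\Omega'))=0$. Two small omissions worth noting: the theorem covers both first- and second-kind coordinates, and the paper transfers the result between the two models via the strata-preserving isomorphism of subsection \ref{spi-subs} (Corollary \ref{cor-iso-weak-cont}); and the reduction from general $\alpha\ge\frac12$ to $\alpha=\frac12$ should be stated as the inclusion $C^{0,\alpha+}\subseteq C^{0,\frac12+}$, which you do implicitly.
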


We will   prove this result in the case $\alpha=\frac{1}{2}$, and
the cases for $\alpha>\frac{1}{2}$ will follow from the fact
\[
C^{0,\alpha+}(\Omega;J^k(\R)) \subset C^{0,\frac{1}{2}+}(\Omega;J^k(\R)).
\]

The identity map $\R^{k+2}\to J^k(\R)$ is locally $\frac{1}{k+1}$-H\"older. 
From the Heisenberg case, one may expect for it to be unknown whether there exist locally $\alpha$-H\"older, injective maps    $f:\R^{n}\to J^k(\R)$ for $\alpha>\frac{1}{k+1}$. 
However,  we will give an example of a locally $\frac{1}{2}$-H\"older, injective map  $f: \R^2\to J^k(\R)$   that is locally Lipschitz as a map into $\R^{k+2}$ (Example \ref{sharpness-example}). 
Comparing with Remark \ref{alphapluslimit}, this suggests that our result is  sharp, at least in the case $n=2$. 

We will first prove Theorem \ref{mf-main-theorem} for when $J^k(\R)$ is equipped with coordinates of the second kind.
We will then prove at  end of the subsection \ref{spi-subs} that this implies the theorem holds for first kind coordinates  as well. 
We will use Warhurst's model for jet spaces equipped with coordinates of the second kind (see \cite[Section 3]{JSA:W}). 
Rigot, Wenger, and Young have used this model to investigate   extendability of Lipschitz maps into jet spaces \cite{LE:WY, LNE:RW}. 

For the next result, we can choose coordinates with respect to any basis compatible with the stratification of $\mathfrak{g}$, but the metric on $(\R^n,\cdot)$ will be induced by this choice. 

\begin{theorem}\label{step-2-3-main-theorem}
Fix $\alpha\ge \frac{1}{2}$ and an open subset $\Omega\subseteq \R^k$.
Suppose $(\R^n,\cdot)$ is a Carnot group of step at most three that is purely $k$-unrectifiable.
Then there is no injective mapping in the class $C^{0,\alpha+}(\Omega; (\R^n,\cdot)) $ that 
is also locally Lipschitz when considered as a map into $\R^n$.
\end{theorem}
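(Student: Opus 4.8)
The plan is to adapt the strategy of Balogh--Haj{\l}asz--Wildrick for $\mathbb H^n$, replacing the Heisenberg-specific computations by ones valid in an arbitrary step-$\le 3$ group written in exponential (first-kind) coordinates. Suppose, toward a contradiction, that such an $f$ exists. Injectivity and both regularity hypotheses pass to restrictions, and $C^{0,\alpha+}(\Omega;(\R^n,\cdot))\subseteq C^{0,\frac12+}(\Omega;(\R^n,\cdot))$ for $\alpha\ge\frac12$ (as in the discussion following Theorem~\ref{mf-main-theorem}), so I would first replace $\Omega$ by a small open ball $U$ and reduce to the case $\alpha=\frac12$ with $f$ moreover $M$-Lipschitz into $\R^n$ on $U$. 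By Rademacher's theorem $f$ is differentiable (into $\R^n$) at a.e.\ point; write $Df(x)\in\mathrm{Hom}(\R^k,\R^n)$ and decompose both the coordinates and $Df$ according to the stratification layers $\mathfrak g_1\oplus\mathfrak g_2\oplus\mathfrak g_3$.

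The core step --- and, as the introduction emphasizes, the main obstacle --- is to show that $f$ is \emph{weakly contact}: for a.e.\ $x_0$ and every $v\in\R^k$, $Df(x_0)v$ lies in the horizontal layer at $f(x_0)$; equivalently the contact equations $\Phi_\ell(x_0,v)=0$ hold for $\ell=2,3$, where $\Phi_2(x_0,v)=Df_2(x_0)v-\tfrac12[f_1(x_0),Df_1(x_0)v]$ and $\Phi_3(x_0,v)$ is the analogous expression built from $Df_3(x_0)v$ and iterated brackets of $f_1(x_0),f_2(x_0)$ with $Df_1(x_0)v,Df_2(x_0)v$. Here I would use that for step $\le 3$ the Baker--Campbell--Hausdorff formula terminates after the triple brackets, so that, writing $p=f(x_0)$, $q=f(x_0+tv)$ and $g=p^{-1}q$, each layer $g_\ell$ is an explicit polynomial in the coordinates of $p$ and $q$. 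On one hand, since $\beta$ is a homeomorphism with $\beta(0)=0$, the $C^{0,\frac12+}$ inequality gives $d_{cc}(p,q)=o(|t|^{1/2})$, and comparability of $d_{cc}$ with the homogeneous norm $\|g\|\asymp\sum_\ell|g_\ell|^{1/\ell}$ yields $|g_\ell|\le C\,d_{cc}(p,q)^\ell=o(|t|)$ for $\ell=2,3$. On the other hand, substituting $q_i=p_i+t\,Df_i(x_0)v+o(t)$ into the polynomial for $g_\ell$, the $t$-independent term vanishes (it equals $g_\ell(0)=0$, since $q=p$ when $t=0$) and the $t$-linear part is precisely $t\,\Phi_\ell(x_0,v)$, so $g_\ell=t\,\Phi_\ell(x_0,v)+o(t)$. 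Comparing the two expansions forces $\Phi_\ell(x_0,v)=0$. The delicate part is the bracket bookkeeping --- verifying that the spurious $t$-independent contributions really do cancel layer by layer and that the homogeneous-norm estimate is applied to the correct power in each layer --- and it is exactly the step $\le 3$ assumption that keeps all of this finite and explicit; this is where I expect the proof to be most involved. (I have written this for exponential coordinates with respect to a stratified basis; second-kind coordinates are handled the same way, the group law being again an explicit low-degree polynomial, as in the $J^k(\R)$ treatment behind Theorem~\ref{mf-main-theorem}.)

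Next I would upgrade $f$ to a locally Lipschitz map into $(\R^n,d_{cc})$. For $a,b\in U$ set $g=f(a)^{-1}f(b)$; then $d_{cc}(f(a),f(b))\asymp |g_1|+|g_2|^{1/2}+|g_3|^{1/3}$, and $|g_1|=|f_1(b)-f_1(a)|\le M|a-b|$ directly. For $\ell=2,3$ one integrates the $\ell$-th contact equation along the segment $\gamma$ from $a$ to $b$ and subtracts it from the BCH-expansion of $g_\ell$; every mismatch term has the shape $\int_0^1[f_j(\gamma(s))-f_j(a),\,\partial_{b-a}f_m(\gamma(s))]\,ds$, which is $O(|a-b|^\ell)$ since $|f_j(\gamma(s))-f_j(a)|\le M|a-b|$ and $|\partial_{b-a}f_m|\le M|a-b|$ a.e. Hence $|g_\ell|\le C(M)|a-b|^\ell$ and $d_{cc}(f(a),f(b))\le C(M)|a-b|$. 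The only subtlety is that the contact equations hold only almost everywhere: by Fubini they hold $\mathcal L^1$-a.e.\ along $\mathcal L^{k-1}$-a.e.\ line parallel to $b-a$, so the estimate holds along a dense family of parallel segments, and then continuity of $f$ into $(\R^n,d_{cc})$ (it is $\tfrac12$-H\"older into $d_{cc}$ on the bounded set $U$) lets me pass to all $a,b\in U$ with a single constant $C(M)$.

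Finally I would invoke the hypothesis. Since $f|_U\colon U\to(\R^n,\cdot)$ is now Lipschitz into $d_{cc}$ and $(\R^n,\cdot)$ is purely $k$-unrectifiable, $\mathcal H_{cc}^k(f(U))=0$. But $f$ is injective and continuous, so for a closed ball $\bar B\subset U$ the restriction $f|_{\bar B}$ is a homeomorphism onto $f(\bar B)$; thus $f(\bar B)$ is a topological $k$-cell, so it has topological dimension $k$, whereas a subset of the metric space $(\R^n,d_{cc})$ with $\mathcal H_{cc}^k$-measure zero has topological dimension less than $k$ (a classical consequence of Szpilrajn's theorem on topological and Hausdorff dimension). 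This contradiction proves the theorem for $\alpha=\frac12$, and the cases $\alpha>\frac12$ follow from the inclusion of H\"older classes recorded above.
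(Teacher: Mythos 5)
Your proposal is correct and follows essentially the same route as the paper: weak contactness is extracted from the two-sided expansion of $p^{-1}q$ (the Ball--Box/Corollary \ref{carnot-bb-cor} estimate with the modulating homeomorphism on one side, the BCH polynomial with the a.e.\ differentials on the other, exactly as in Lemmas \ref{step-2-hor-lemma} and \ref{step-3-hor-lemma}), the map is then upgraded to a cc-Lipschitz map, and the contradiction comes from pure $k$-unrectifiability together with the Szpilrajn-type bound on topological dimension, as in Proposition \ref{weakly-contact-prop}. The only cosmetic differences are that the paper performs the Lipschitz upgrade by comparing Euclidean and sub-Riemannian lengths of horizontal restrictions to coordinate lines (Proposition \ref{carnot-lip-weak-prop}) rather than by integrating the contact equations along segments, and that its lemmas deduce, rather than assume via Rademacher, the differentiability of the higher-layer components.
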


As for $J^k(\R)$, we will only explicitly prove this for $\alpha = \frac{1}{2}$.  
These two theorems will be proven in a similar fashion, implied by the following result:
\begin{proposition}\label{weakly-contact-prop}
Fix  an open subset $\Omega \subseteq \R^k$. 
Let $(\R^n,\cdot)$ be a Carnot group that is purely $k$-unrectifiable. 
Then there is no injective mapping $f:\Omega\to (\R^n,\cdot)$ that is weakly contact and
locally Lipschitz when considered as a map into $\R^n$.
\end{proposition}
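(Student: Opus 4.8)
The plan is to show that an injective, weakly contact map $f:\Omega\to(\R^n,\cdot)$ that is locally Lipschitz into $\R^n$ would produce a Lipschitz image of positive $\mathcal H^k_{cc}$-measure inside $(\R^n,\cdot)$, contradicting pure $k$-unrectifiability. The first step is to recall what "weakly contact" buys us: since $f$ is locally Lipschitz into $\R^n$, by Rademacher's theorem it is differentiable a.e., and the weak contact condition says that at a.e. point $x\in\Omega$ the (approximate/classical) differential $Df(x)$ maps $\R^k$ into the horizontal layer of the Lie algebra, i.e. $\operatorname{Im}Df(x)\subseteq \mathfrak g_1$ after the standard identification of the tangent space of $(\R^n,\cdot)$ with $\mathfrak g$. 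The key geometric input is the ball-box estimate for the Carnot–Carathéodory metric: there is a constant $C$ so that on a compact set, $d_{cc}(p,q)\le C\bigl(|p-q|_{\R^n}^{1/r} + |p-q|_{\text{hor}}\bigr)$ where $|\cdot|_{\text{hor}}$ denotes the Euclidean size of the horizontal component of $p^{-1}\cdot q$; more precisely, if the displacement $p^{-1}\cdot q$ has small vertical components, $d_{cc}$ is comparable to the horizontal Euclidean distance.

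Next I would localize and rescale. Fix a point $x_0$ of differentiability where the contact condition holds; after a left translation we may assume $f(x_0)=0$, and after composing with a linear map of $\R^k$ we may assume the domain is a small cube $Q$ around $x_0$. Write $f(x) = Df(x_0)(x-x_0) + o(|x-x_0|)$ in the Euclidean $\R^n$ coordinates. Because $\operatorname{Im}Df(x_0)\subseteq\mathfrak g_1$, the linear part $L:=Df(x_0)$ has image in the horizontal subspace, so for $x$ near $x_0$ the point $f(x_0)^{-1}\cdot f(x)$ has horizontal component of size $O(|x-x_0|)$ and, using the group law (BCH) together with the error term, vertical components that are $o(|x-x_0|)$ — the bracket-generated vertical terms would be quadratic in a genuinely nonhorizontal displacement, but here the leading displacement is horizontal, so the vertical part is lower order. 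Feeding this into the ball-box estimate gives the crucial Lipschitz-type bound
\[
d_{cc}\bigl(f(x),f(y)\bigr)\le C\,|x-y| \qquad\text{for } x,y\in Q,
\]
possibly after shrinking $Q$. This step — getting a genuine $d_{cc}$-Lipschitz estimate on a set of positive $\mathcal L^k$-measure from the a.e. horizontality and the local Euclidean Lipschitz bound — is the heart of the argument and the part I expect to require the most care, since one must control the vertical drift uniformly, not just at a single point; the honest way is to invoke the structure results already developed in the paper (the explicit BCH formulas in step $\le 3$, and Warhurst's model for $J^k(\R)$) that make the "vertical part is $o(|x-x_0|)$" claim quantitative on a neighborhood.

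Finally I would run the covering/contradiction. By Lusin-type reasoning (or Egorov on the $o(\cdot)$ error), the set where the above works has positive measure, so there is a compact $K\subseteq\Omega$ with $\mathcal L^k(K)>0$ on which $f|_K$ is $d_{cc}$-Lipschitz. Pure $k$-unrectifiability of $(\R^n,\cdot)$ then forces $\mathcal H^k_{cc}(f(K))=0$. On the other hand, $f$ is injective and locally Lipschitz into $\R^n$ with a.e. differential of rank $k$ on $K$ (injectivity plus the contact condition pin down the rank on a full-measure subset — otherwise $f$ would collapse a positive-measure set), so the area formula in $\R^n$ gives $\mathcal H^k_{\R^n}(f(K))>0$, and since on the compact set $f(K)$ the identity $(\R^n,\cdot)\to\R^n$ is Lipschitz (Proposition \ref{holder}), $\mathcal H^k_{\R^n}(f(K))\le C\,\mathcal H^k_{cc}(f(K))=0$, a contradiction. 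Hence no such $f$ exists.
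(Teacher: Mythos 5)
Your overall architecture (weak contactness plus Euclidean Lipschitz regularity forces a $d_{cc}$-Lipschitz bound, which unrectifiability then contradicts via injectivity) matches the paper's, but both of your two key steps have genuine gaps.

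First, the passage from pointwise horizontality of $df_{x_0}$ to the bound $d_{cc}(f(x),f(y))\le C|x-y|$ does not work the way you describe. The Ball--Box Theorem converts a layer-$j$ component of size $\eps$ of the displacement $f(x_0)^{-1}\cdot f(x)$ into a $d_{cc}$-contribution of size $\eps^{1/j}$. Differentiability of $f$ at $x_0$ together with $\operatorname{im} df_{x_0}\subseteq H_{f(x_0)}(\R^n,\cdot)$ only makes the layer-$j$ components $o(|x-x_0|)$ for $j\ge 2$, and $o(|x-x_0|)^{1/j}$ is \emph{not} $O(|x-x_0|)$; you would need the layer-$j$ components to be $O(|x-x_0|^{j})$, which pointwise differentiability does not give. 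So even the one-basepoint estimate fails, let alone the two-point estimate on a cube. The paper's Proposition \ref{carnot-lip-weak-prop} avoids this entirely: by Fubini, $f$ restricted to almost every coordinate line segment is a horizontal curve whose sub-Riemannian speed is comparable to its Euclidean speed $\le L$; since $d_{cc}$ is defined as an infimum of horizontal lengths, integrating along these segments gives a genuine $CL$-Lipschitz bound into $(\R^n,\cdot)$ on every segment, hence on the whole cube. No blow-up, Egorov, or BCH expansion is needed.

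Second, your endgame via the area formula rests on the claim that injectivity forces $df$ to have rank $k$ on a full-measure (or at least positive-measure) subset of $K$. That is false for Lipschitz maps: if $h$ is the Lipschitz, strictly increasing primitive of the indicator of the complement of a fat Cantor set $C\subseteq[0,1]$, then $(x,y)\mapsto(h(x),y)$ is injective and Lipschitz with rank-one differential on the positive-measure set $C\times[0,1]$; ``collapsing a positive-measure set'' to an $\mathcal{H}^k$-null image is perfectly compatible with injectivity. The correct way to turn $\mathcal{H}^k_{\R^n}(f(\Omega))=0$ into a contradiction with injectivity is dimension-theoretic: an $\mathcal{H}^k$-null set has topological dimension at most $k-1$ (Theorem 8.15 of \cite{LOA:H}), while $f(\Omega)$, being the homeomorphic image of a $k$-cube, has topological dimension $k$. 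Note that this argument needs the Lipschitz estimate and the measure-zero conclusion on the \emph{whole} cube, not merely on a Cantor-like positive-measure set $K$ as your Egorov reduction would provide --- which is another reason the Fubini route to Proposition \ref{carnot-lip-weak-prop} is the right one.
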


Thus, to prove Theorems \ref{mf-main-theorem} and \ref{step-2-3-main-theorem}, it suffices to show  that if a map in  $C^{0,\frac{1}{2}+}(\Omega, (\R^n,\cdot))  $ is locally Lipschitz as a map into $\R^n$, then it is weakly contact.
We prove this for the class of model filiform jet spaces, $J^k(\R)$, in Proposition \ref{mf-diff-prop}, for step 2 Carnot groups in Lemma \ref{step-2-hor-lemma}, and for step 3 Carnot groups in Lemma \ref{step-3-hor-lemma}. 
We will discuss weakly contact maps further in subsection \ref{weak-cont-subs}.

Proposition \ref{mf-diff-prop} follows from considering the group structure on $J^k(\R)$, specifically Lemma \ref{jet-ind-lemma}.
The proofs of  Lemmas \ref{step-2-hor-lemma} and \ref{step-3-hor-lemma} are a bit technical and requires one to carefully work with  group  structures, bounding terms via the Ball-Box Theorem (Theorem \ref{bb-theorem}) and the modulating homeomorphism.
It is expected that Theorem \ref{mf-main-theorem} and \ref{step-2-3-main-theorem} should generalize to all Carnot groups if one attains a better understanding of the group structure arising from coordinates of the first kind. 
We will discuss this more  at the end of this paper.

\section{Background on Carnot groups}\label{back-carnot-groups}


In this section, we will review the basics of Carnot groups, discussing two systems of coordinates, the Carnot-Carath\'eodory metric, and weakly contact maps.

For some $r$, the Lie algebra $\mathfrak{g}$ of a Carnot group $G$ admits an $r$-step stratification: 
\[
\mathfrak{g}= \mathfrak{g}_1\oplus\mathfrak{g}_2\oplus \cdots \oplus \mathfrak{g}_r,
\]
where $\mathfrak{g}_1\subseteq \mathfrak{g}$ is a subspace, $\mathfrak{g}_{j+1} = [\mathfrak{g}_1,\mathfrak{g}_j]$ for all $j=1,\ldots , r-1$, and $[\mathfrak{g}, \mathfrak{g}_r] =0$. 
We write $[\mathfrak{g}_1,\mathfrak{g}_j]$ to denote the subspace generated by commutators of elements of $\mathfrak{g}_1$ with  elements of $\mathfrak{g}_j$, and similarly with $[\mathfrak{g},\mathfrak{g}_r]$. 
The subspaces $\mathfrak{g}_j$ are commonly referred to as the \textbf{layers} of $\mathfrak{g}$, with $\mathfrak{g}_1$ referred to as the \textbf{horizontal layer}.
We define the step of $G$ to be $r$, and this is well-defined \cite[Proposition 2.2.8]{SLG:BLU}.
Throughout this paper, we will implicitly fix a stratification for each Carnot group. In other words, we will view the stratification of $\mathfrak{g}$ as data of a Carnot group $G$. 

After combining bases of the subspaces $\mathfrak{g}_j$ to obtain a basis of $\mathfrak{g}$, we can define an inner product  $g= \langle \cdot , \cdot\rangle$ on $\mathfrak{g}$  by declaring the combined basis to be orthonormal.
Thus, we  say that a basis $\mathcal{B}= \{X^1,\ldots , X^n\}$ of $\mathfrak{g}$ is \textbf{compatible with the stratification of} $\mathfrak{g}$ if 
\[
\{X^{h_{j-1}+1}, \ldots , X^{h_j}\}
\]
is a basis of $\mathfrak{g}_j$ for each $j$, where $h_j = \sum_{i=1}^{j} \text{dim}(\mathfrak{g}_i)$. 
As we discuss coordinates of the first and second kind, it will be implied that coordinates are being taken with respect to a basis compatible with the stratification of $\mathfrak{g}$. 
While choosing different bases may technically result in different group structures, we will see that the resulting Carnot groups are all  isomorphic to $G$.


\subsection{Coordinates of the first kind}\label{first-coord-section}

For Carnot groups, the exponential map $\text{exp}:\mathfrak{g}\to G$ is a diffeomorphism \cite[Page 13]{CG:RON}.
Hence we can define $\star:\mathfrak{g}\times \mathfrak{g} \to \mathfrak{g}$ by
\[
X\star Y = \exp^{-1} (\exp(X)\exp(Y)).
\]
The Baker-Campbell-Hausdorff formula gives us an explicit formula for $X\star Y$:
\[
X\star Y = \sum_{n>0} \frac{(-1)^{n+1}}{n}  \sum_{0<p_i+q_i} \frac{1}{C_{p,q}}  (\ad X)^{p_1} (\ad Y)^{q_1} \cdots (\ad Y)^{q_{n-1}}W(p_n,q_n),
\]
where 
\[
C_{p,q} = p_1! q_1! \cdots p_n!q_n! \sum_{i=1}^n (p_i+q_i)
\]
and
\[
W({p_n},{q_n} ) = \left\{ \begin{array}{ll}
(\ad X)^{p_n} (\ad Y )^{q_n-1}Y, &\text{if } q_n\ge 1,\\
(\ad X)^{p_n-1} X , &\text{if } q_n=0.
\end{array}\right.
\]
The expansion of $X\star Y$ up to order $3$ is given by
\[
X+Y + \frac{1}{2} [X,Y] + \frac{1}{12} ([X,[X,Y]] + [Y,[Y,X]]). 
\]

Set $n$ equal to the topological dimension of $G$, and let $\mathcal{B}\subset \mathfrak{g}$ be a basis  compatible with the stratification of $\mathfrak{g}$. 
We can identify  $\mathfrak{g}$ with $\R^n$ via coordinates of $\mathcal{B}$, and then $\star$ on $\mathfrak{g}$ translates into an operation on $\R^n$.
With a slight abuse of notation, we will also denote this operation on $\R^n$ by $\star$. 
Then $(\R^n,\star)$ is a Carnot group  isomorphic to $G$ via $\exp$  \cite[Proposition 2.2.22]{SLG:BLU}. 
We say that $(\R^n,\star)$ is a \textbf{normal model of the first kind} of $G$ and that $(\R^n,\star)$ is $G$ equipped with \textbf{coordinates of the first kind with respect to $\mathcal{B}$}.
Observe that if $G$ is of step $r$, each coordinate of $X\star Y$ is a polynomial of homogeneous degree at most $r$ in the coordinates of $X$ and $Y$. 

\subsection{Path metric on Carnot groups}\label{path-subsection}

Let $(\R^n,\cdot)$ be a Carnot group, and set $m_j = \text{dim}(\mathfrak{g}_j)$ for each $j$.
Fix a basis  $\mathcal{B}_1=\{X^1,\ldots , X^{m_1}\}$ for  the horizontal layer $\mathfrak{g}_1$.
The \textbf{horizontal bundle}  of $(\R^n,\cdot)$ is defined fiberwise by
\[
H_p (\R^n,\cdot) := \text{span}\{X^1_p ,\ldots , X^{m_1}_p\}.
\]
Note the horizontal bundle is left-invariant: 
\[
H_p(\R^n,\cdot) = dL_p H_0(\R^n,\cdot).
\]
Declaring $(\mathcal{B}_1)_p$ to be orthonormal, we obtain an inner product on each fiber $H_p(\R^n,\cdot)$. 

Recall that we  just write  $\R^n$  to denote Euclidean space equipped with the standard Euclidean metric.

\begin{definition}
 We say a path $\gamma:[a,b]\to (\R^n,\cdot) $ is \textbf{horizontal} if it is absolutely continuous as a map into $\R^n$  and
\[
\gamma'(t) \in H_{\gamma(t)}(\R^n,\cdot) \quad\text{for a.e. } t\in [a,b].
\]
We define the \textbf{length} of a horizontal path to be 
\[
l_H(\gamma) := \int_a^b |\gamma'(t)|\ dt.
\]
Here, $|\gamma'(t)| := \sqrt{\langle \gamma'(t),\gamma'(t)\rangle_{\gamma(t)}}$ whenever  
$\gamma$ is differentiable at $t$ with 
$\gamma'(t) \in H_{\gamma(t)}(\R^n,\cdot)$. 
\end{definition}
Note the length of a horizontal path is finite (see 3.35, \cite{RA:F}).

A theorem by Chow \cite{USV:C} states that $(\R^n,\cdot)$ is horizontally path-connected. This enables us to define the \textbf{Carnot-Carath\'eodory metric} on $(\R^n,\cdot)$:
\[
d_{cc}(x,y) := \inf_{\gamma:[a,b]\to (\R^n,\cdot)} \{l_H(\gamma) : \gamma \text{ is horizontal} , \  \gamma(a)= x, \ \gamma(b) = y\}.
\]
Another common name for this metric is \emph{cc-metric}. 
It is well-known that the Carnot-Carath\'eodory metric defines a  geodesic metric on $(\R^n,\cdot)$, i.e., for every $x$, $y  \in (\R^n,\cdot)$, there exists a horizontal path $\gamma$ connecting $x$ to $y$ with $d_{cc}(x,y) = l_H(\gamma)$ \cite[Theorem 5.15.5]{SLG:BLU}.

Suppose $(\R^n,\cdot)$ is step $r$. 
From the previous two sections,
a point $x\in (\R^n,\cdot)$ is of the form $(\vec{x_1},\vec{x_2}\ldots , \vec{x_r})$, where  each $\vec{x_j}$ lies in $\R^{m_j}$ and corresponds to the coefficients of the elements of $\mathfrak{g}_j$. 
For each $\eps>0$, we can define a \textbf{dilation} $\delta_\epsilon:(\R^n,\cdot)\to (\R^n,\cdot) $ by 
\[
\delta_\epsilon(\vec{x_1},\vec{x_2}, \ldots , \vec{x_r}) := (\eps\vec{x_1}, \eps^2\vec{x_2} , \ldots , \eps^r \vec{x_r}).
\]
The Carnot-Carath\'eodory metric is left-invariant and one-homogeneous with respect to these dilations:

For all $\epsilon>0$ and $x,y,z\in (\R^n,\cdot)$, 
\begin{itemize}
 \item $d_{cc}(z\cdot x, z\cdot y )= d_{cc}(x,y)$
\item $d_{cc}(\delta_\epsilon(x) , \delta_\epsilon(y)) =\epsilon\cdot d_{cc}(x,y).$
\end{itemize}

One may wonder how the Carnot-Carath\'eodory metric on $(\R^n,\cdot)$ relates to the standard Euclidean metric on $\R^n$.
From Proposition \ref{holder}, $(\R^n,\cdot)$ and $\R^n$ have the same topologies. 
Furthermore, Proposition  \ref{holder} (combined with  left-invariance and homogeneity) implies  the following version of the Ball-Box Theorem:
\begin{theorem}{(Ball-Box Theorem)}\label{bb-theorem}
Suppose $(\R^n,\cdot)$ is a step $r$ Carnot group. 
For $\eps>0$ and $p\in (\R^n,\cdot)$, define 
\[
Box (\eps) :=\prod_{j=1}^r [-\eps^j,\eps^j]^{m_j} 
\]
and
\[
B_{cc}(p,\eps) := \{q\in (\R^n,\cdot): d_{cc}(p,q) \le \eps\}.
\]
There exists $C>0$ such that for all $\eps>0$ and $p\in (\R^n,\cdot)$,
\[
B_{cc} (p,\eps/C) \subseteq p\cdot Box(\eps) \subseteq B_{cc}(p,C\eps).
\]
\end{theorem}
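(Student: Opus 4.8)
The plan is to exploit the two symmetries of the cc-metric recorded just before the statement — left-invariance and one-homogeneity under the dilations $\delta_\eps$ — to reduce the theorem to a single pair of inclusions at the basepoint $p=0$ and scale $\eps=1$, and then to read those off from the two halves of Proposition \ref{holder}. First I would reduce to $p=0$: by left-invariance $B_{cc}(p,s)=p\cdot B_{cc}(0,s)$ for every $s>0$, while $p\cdot Box(\eps)$ is by definition the left-translate of the origin-centered set $Box(\eps)$. Since left-translation by $p$ is a bijection of $\R^n$ it preserves inclusions, so the asserted chain
\[
B_{cc}(p,\eps/C)\subseteq p\cdot Box(\eps)\subseteq B_{cc}(p,C\eps)
\]
is equivalent to the same chain with $p$ replaced by $0$. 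Next I would reduce to $\eps=1$: one checks $\delta_\eps(Box(1))=Box(\eps)$, because $\delta_\eps$ scales the $j$-th layer coordinates by $\eps^j$ and $Box(1)=\prod_{j=1}^r[-1,1]^{m_j}$, and by homogeneity together with $\delta_\eps(0)=0$ one has $\delta_\eps(B_{cc}(0,s))=B_{cc}(0,\eps s)$. As $\delta_\eps$ is a bijection, applying it to the unit-scale chain $B_{cc}(0,1/C)\subseteq Box(1)\subseteq B_{cc}(0,C)$ yields precisely the scale-$\eps$ chain at the origin. Thus it suffices to produce one constant $C>0$ for which the two unit-scale inclusions hold.

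For the outer inclusion $Box(1)\subseteq B_{cc}(0,C)$ I would argue by compactness. The set $Box(1)=[-1,1]^n$ is compact in $\R^n$; since $\text{id}:\R^n\to(\R^n,\cdot)$ is locally $\tfrac1r$-H\"older by Proposition \ref{holder} it is in particular continuous, and $q\mapsto d_{cc}(0,q)$ is continuous on $(\R^n,\cdot)$, being $1$-Lipschitz for $d_{cc}$. Hence $q\mapsto d_{cc}(0,q)$ is a continuous real-valued function on the compact set $Box(1)$ and attains a finite maximum $C_1:=\max_{q\in Box(1)}d_{cc}(0,q)$, so any $C\ge C_1$ gives $Box(1)\subseteq B_{cc}(0,C)$.

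For the inner inclusion $B_{cc}(0,1/C)\subseteq Box(1)$ I would use the other half of Proposition \ref{holder}: $\text{id}:(\R^n,\cdot)\to\R^n$ is locally Lipschitz, so there exist $\rho>0$ and $M>0$ with $|q|=|q-0|\le M\,d_{cc}(0,q)$ for all $q\in B_{cc}(0,\rho)$. Choosing $C\ge\max(M,1/\rho)$ forces any $q$ with $d_{cc}(0,q)\le 1/C$ to satisfy $d_{cc}(0,q)\le\rho$, hence $|q|\le M/C\le 1$, so each coordinate of $q$ lies in $[-1,1]$ and $q\in Box(1)$. Taking $C=\max(C_1,M,1/\rho)$ makes both unit-scale inclusions hold simultaneously, and the two reductions above then deliver the stated chain for all $\eps>0$ and all $p$.

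The argument is essentially bookkeeping once the reductions are in place, so there is no deep obstacle; the point demanding the most care is matching each half of Proposition \ref{holder} to the correct inclusion — the H\"older/continuity estimate bounds $d_{cc}$ from above and yields $Box\subseteq B_{cc}$, while the Lipschitz estimate bounds the Euclidean norm and yields $B_{cc}\subseteq Box$ — and then checking that one and the same $C$ survives both the dilation and the translation reductions. A minor subtlety is that local Lipschitzness may only be invoked in a fixed cc-neighborhood of the origin, which is exactly why the inner inclusion is stated at the small radius $1/C$ and is harmless after rescaling by $\delta_\eps$.
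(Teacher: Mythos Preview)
Your proposal is correct and follows essentially the same approach the paper indicates: the paper simply remarks that the theorem follows from Proposition \ref{holder} combined with left-invariance and homogeneity, and your argument spells out precisely those two reductions (to $p=0$, then to $\eps=1$) followed by reading off the two inclusions from the two halves of Proposition \ref{holder}. Your use of compactness for the outer inclusion is a minor elaboration, but the overall strategy is the same.
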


We obtain an important corollary which allows us to estimate the cc-metric:
\begin{corollary}\label{carnot-bb-cor}
Suppose $(\R^n,\cdot)$ is a step $r$ Carnot group.
There exists $C>0$ such that for all $p=(a_1^1,\ldots , a_{m_1}^1,a^2_1,\ldots , a^r_{m_r})\in (\R^n,\cdot)$,
\[
\frac{1}{C} \cdot d_{cc}(0,p)\le \max\{|a^j_k|^{1/j}: 1\le j\le r, \ 1\le k \le m_j\}\le C d_{cc}(0,p).
\]
\end{corollary}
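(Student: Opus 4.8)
The plan is to deduce Corollary~\ref{carnot-bb-cor} directly from the Ball-Box Theorem (Theorem~\ref{bb-theorem}) by translating the box inclusions into coordinatewise estimates. First I would fix the constant $C>0$ supplied by Theorem~\ref{bb-theorem}, so that $B_{cc}(0,\eps/C)\subseteq Box(\eps)\subseteq B_{cc}(0,C\eps)$ for every $\eps>0$, where now the box is centered at the identity $0$. The elementary observation linking boxes to the stated quantity is that, for a point $p=(a_1^1,\dots,a_{m_r}^r)$, one has $p\in Box(\eps)$ if and only if $|a_k^j|\le \eps^j$ for all $j,k$, which is exactly the same as saying $\max\{|a_k^j|^{1/j}\}\le \eps$. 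So set $N(p):=\max\{|a_k^j|^{1/j}: 1\le j\le r,\ 1\le k\le m_j\}$; then $p\in Box(\eps)\iff N(p)\le \eps$.

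Next I would run the two inclusions. For the upper bound on $d_{cc}$: take $\eps=N(p)$ (if $p=0$ there is nothing to prove, so assume $\eps>0$), so $p\in Box(\eps)\subseteq B_{cc}(0,C\eps)$, giving $d_{cc}(0,p)\le C\,N(p)$, i.e. $\tfrac1C d_{cc}(0,p)\le N(p)$. For the lower bound: set $\eps = C\, d_{cc}(0,p)$ if this is positive (again the zero case is trivial), so $p\in B_{cc}(0,\eps/C)=B_{cc}(0,d_{cc}(0,p))\subseteq Box(\eps)$, whence $N(p)\le \eps = C\, d_{cc}(0,p)$. Combining the two displays yields precisely
\[
\frac1C\, d_{cc}(0,p)\le N(p)\le C\, d_{cc}(0,p),
\]
as claimed. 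One small bookkeeping point: Theorem~\ref{bb-theorem} states the inclusions with closed cc-balls $B_{cc}(p,\eps)=\{q:d_{cc}(p,q)\le\eps\}$, so $p\in B_{cc}(0,d_{cc}(0,p))$ holds trivially, and no limiting argument is needed; if one preferred open balls a routine $\eps\to\eps^{+}$ passage would fix it.

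There is essentially no hard part here: the corollary is a pure reformulation, and the only thing to be careful about is matching the two directions of the inequality to the two inclusions and handling the degenerate case $p=0$ (equivalently $d_{cc}(0,p)=0$) separately, where both sides vanish. The real content has already been expended in proving Proposition~\ref{holder} and Theorem~\ref{bb-theorem}; this step merely repackages the box description of cc-balls into the homogeneous ``quasi-norm'' estimate $N(p)\asymp d_{cc}(0,p)$ that will be convenient for the later computations with the modulating homeomorphism.
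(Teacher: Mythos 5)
Your proof is correct and follows essentially the same route as the paper, which also deduces the corollary directly from the Ball-Box Theorem (the paper's version fixes $\eps=d_{cc}(0,p)$ and uses $p\in B_{cc}(0,\eps)\setminus B_{cc}(0,\eps/2)$ to get $p\in Box(C\eps)\setminus Box(\eps/(2C))$, whereas you apply each inclusion once via the clean equivalence $p\in Box(\eps)\iff N(p)\le\eps$). The difference is purely cosmetic; your bookkeeping of the degenerate case and the closed-ball convention is fine.
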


\subsection{Weakly contact Lipschitz mappings}\label{weak-cont-subs}
Fix  an open set $\Omega\subseteq \R^k$ and a Carnot group $(\R^n,\cdot)$.
If $f:\Omega \to (\R^n,\cdot)$ is Lipschitz,  $f$ is locally Lipschitz as a map into $\R^n$ by Proposition \ref{holder}.
By Rademacher's Theorem, then $f$ is differentiable almost everywhere in $\Omega$.
We say a  locally Lipschitz map $f:\Omega\to \R^n$ is \textbf{weakly contact} if
\[
\text{im } df_x \subset H_{f(x)} (\R^n,\cdot)\quad  \text{for } \mathcal{H}^k-\text{almost
 every } x\in \Omega.
\]
Here, we write $df_x$ to denote the \textbf{differential} or \textbf{total derivative} of $f$ at $x$. 
Observe that by Theorem 9.18 of \cite{PMA:R}, if $f$ is differentiable at $x\in \Omega$, then  
\[
\text{im } df_x \subset H_{f(x)} (\R^n,\cdot) \qquad \Longleftrightarrow \qquad \partial_i f(x)\in H_{f(x)}(\R^n,\cdot)  \ \text{for all }  i = 1,\ldots , k.
\]


Balogh, Haj{\l}asz, and Wildrick proved for the $n^{th}$ Heisenberg group $\mathbb{H}^n$ that if a Lipschitz map $f:[0,1]^k \to \R^{2n+1}$ is weakly contact, then it is actually Lipschitz as a map into $\mathbb{H}^n$ \cite[Proposition 8.2]{WCE:BHW}.
Their proof easily converts into a  statement for all Carnot groups.
To keep this paper as self-contained as possible, we will repeat the argument here.

\begin{proposition}\label{carnot-lip-weak-prop}
Let $k$ be a positive integer. If $f:[0,1]^k\to \R^n$ is Lipschitz and weakly contact,
then $f:[0,1]^k \to (\R^n,\cdot)$ is Lipschitz.
\end{proposition}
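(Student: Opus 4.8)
The plan is to control the Carnot-Carathéodory distance between $f(x)$ and $f(y)$ by integrating along the segment joining $x$ to $y$ in $[0,1]^k$ and exploiting weak contactness. First I would fix $x,y \in [0,1]^k$ and consider the straight-line path $\sigma(t) = (1-t)x + ty$, $t\in[0,1]$, which stays in $[0,1]^k$ by convexity. The composition $\gamma = f\circ\sigma$ is Lipschitz as a map into $\R^n$ (since $f$ is Lipschitz into $\R^n$ and $\sigma$ is affine), hence absolutely continuous, and by the chain rule $\gamma'(t) = df_{\sigma(t)}(y-x)$ for a.e.\ $t$. The catch is that weak contactness only gives $\mathrm{im}\, df_z \subset H_z(\R^n,\cdot)$ for $\mathcal{H}^k$-a.e.\ $z\in\Omega$, whereas the segment $\sigma([0,1])$ is $\mathcal{H}^k$-null; so I cannot directly conclude $\gamma'(t)$ is horizontal for a.e.\ $t$. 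This is the main obstacle, and the standard fix is a Fubini argument: one shows that for $\mathcal{H}^k$-a.e.\ \emph{pair} $(x,y)$ — or, more cleanly, after an arbitrarily small perturbation of the pair — the segment avoids the bad set.

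Concretely, I would argue as follows. Let $E\subseteq\Omega$ be the set of points where either $f$ fails to be differentiable or $\mathrm{im}\, df_z \not\subset H_z(\R^n,\cdot)$; by hypothesis $\mathcal{H}^k(E)=0$, i.e.\ $E$ is Lebesgue-null in $\R^k$. Fix $x,y$ and write the displacement as $v = y-x$. By Fubini applied to the coordinates transverse to $v$, for a.e.\ line in direction $v$ the intersection with $E$ has one-dimensional measure zero; equivalently, there is a full-measure set of translates $w$ such that the segment $t\mapsto x + w + t v$ (for small $w$) meets $E$ in a $t$-null set. Along such a segment $\gamma_w = f(x+w+tv)$ we get $\gamma_w'(t) = df_{x+w+tv}(v) \in H_{\gamma_w(t)}(\R^n,\cdot)$ for a.e.\ $t$, so $\gamma_w$ is a horizontal path (absolute continuity into $\R^n$ is automatic from the Lipschitz hypothesis), and therefore
\[
d_{cc}(f(x+w), f(x+w+v)) \le l_H(\gamma_w) = \int_0^1 |df_{x+w+tv}(v)|\, dt \le L\,|v|,
\]
where $L$ is a bound on the operator norm of $df$ in the horizontal inner product coming from the Lipschitz constant of $f$ into $\R^n$ together with the Ball-Box Theorem (Theorem \ref{bb-theorem}) — more precisely, on the relevant compact set the Euclidean norm and the horizontal norm are comparable on the horizontal bundle, since $\mathfrak{g}_1$ sits in the first layer where $d_{cc}$ and Euclidean distance are biLipschitz by Corollary \ref{carnot-bb-cor}.

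Finally I would remove the perturbation $w$. Since $f$ is continuous as a map into $(\R^n,\cdot)$ — indeed $\mathrm{id}:\R^n\to(\R^n,\cdot)$ is continuous by Proposition \ref{holder} and $f$ is continuous into $\R^n$ — letting $w\to 0$ along the full-measure set of good translates gives $d_{cc}(f(x),f(y)) \le L|x-y|$ by continuity of $d_{cc}$. As $x,y\in[0,1]^k$ were arbitrary, $f:[0,1]^k\to(\R^n,\cdot)$ is $L$-Lipschitz. I expect the only delicate points to be: (i) making the Fubini/good-translate argument precise, i.e.\ ensuring the set of $w$ for which the segment is $t$-a.e.\ disjoint from $E$ is dense (full measure suffices and follows from Fubini since $E$ is Lebesgue-null in $\R^k$); and (ii) justifying the bound $|df_z(v)| \le L|v|$ uniformly, for which one notes that weak contactness already forces $df_z(v)\in H_{f(z)}$, $f$ is Lipschitz into $\R^n$ with some constant $L_0$, and on each horizontal fiber the CC-length element is comparable to the Euclidean one with a constant independent of the base point by left-invariance of the horizontal metric. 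Everything else is routine.
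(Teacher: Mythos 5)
Your proof is correct and follows essentially the same route as the paper: Fubini to obtain horizontality of $f$ along almost every segment in a chosen family, comparability of the horizontal and Euclidean lengths on the bounded set $f([0,1]^k)$, and a density/continuity argument to pass from almost every segment to every segment. The only cosmetic difference is that the paper restricts to axis-parallel segments and concludes via a staircase path and the triangle inequality, whereas you perturb an arbitrary segment transversally and let the perturbation tend to zero using continuity of $f$ into $(\R^n,\cdot)$.
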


\begin{proof}
Fix a weakly contact map $f:[0,1]^k \to \R^n$ that is $L$-Lipschitz.
Fubini's Theorem implies the restriction of $f$ to almost every line segment parallel to a coordinate axis is horizontal. 
On bounded sets, the lengths with respect to the sub-Riemannian metrics and to the Euclidean metrics are equivalent for horizontal vectors.
As $f[0,1]^k$ is bounded and the Euclidean speed of $f$ is bounded by $L$ on line segments, it follows that the restriction of $f$ on almost every line segment parallel to a coordinate axis is $CL$-Lipschitz as a map into $(\R^n,\cdot)$. 
Hence  the restriction of $f$ on \emph{each} line segment parallel to a coordinate axis is $CL$-Lipschitz as a map into $(\R^n,\cdot)$, and the result follows.
\end{proof}

This enables us to prove  Proposition \ref{weakly-contact-prop}, a result fundamental to our paper. 
The proof of Theorem 1.11 in \cite{WCE:BHW} for the Heisenberg group translates into a result for all Carnot groups. 

\begin{proof}[Proof of Proposition \ref{weakly-contact-prop}]
Assume that there is an injective map $f:\Omega\to (\R^n,\cdot)$ that is locally Lipschitz as
a map into $\R^n$.
Restricting $f$, we may assume $\Omega$ is a closed cube and $f$ is Lipschitz as a map into $\R^n$.
If $f$ is weakly contact, $f:\Omega\to (\R^n,\cdot)$ is Lipschitz,
which implies $\mathcal{H}^k_{(\R^n,\cdot)} (f(\Omega))=0$.
As the identity map from $(\R^n,\cdot)$ to $\R^n$ is locally Lipschitz (by Proposition \ref{holder}), 
$\mathcal{H}^k_{\R^n} (f(\Omega)) =0$.
It follows from Theorem 8.15 of \cite{LOA:H}  that the topological dimension of $f(\Omega)$ is at most $k-1$.
Since $f|_{\Omega}$  is a homeomorphism, $f(\Omega)$ is  of the same topological dimension as $\Omega$, which is a contradiction.
\end{proof}

The main theorems of this paper thus reduce to showing  locally Lipschitz maps
$f$  into $\R^n$ that are of class $C^{0,\frac{1}{2}+}(\Omega,(\R^n,\cdot))$,  are weakly contact. 
Balogh, Haj{\l}asz, and Wildrick proved this for the Heisenberg group \cite[Proposition 8.1]{WCE:BHW}.
In this paper, we will prove it for models of  jet spaces and models of Carnot groups of step at most three.

\subsection{Strata-preserving isomorphisms}\label{spi-subs}
Suppose $G$ is a Carnot group with stratification
\[
\mathfrak{g}= \mathfrak{g}_1\oplus \cdots \oplus \mathfrak{g}_r.
\]
We define the family of dilations $\{\mathfrak{d}_\epsilon\}_{\epsilon>0}$  to be the collection of isomorphism of  $\mathfrak{g}$ induced by $\mathfrak{d}_\epsilon(X_j) = \epsilon^jX_j$, $X_j \in\mathfrak{g}_j$.
Each $\mathfrak{d}_\epsilon$ is a Lie group automorphism of $(\mathfrak{g},\star)$ \cite[Remark 1.3.32]{SLG:BLU}, i.e., 
\begin{equation}
\mathfrak{d}_\epsilon (X\star Y) = (\mathfrak{d}_\epsilon(X)) \star (\mathfrak{d}_\epsilon (Y)) \quad \text{for all }X,Y\in \mathfrak{g}. \label{auto-dil}
\end{equation}
 These dilations on $\mathfrak{g}$ are also commonly notated as $\delta_\epsilon$, but we will not do so here to avoid confusion with the dilations on $G$.

As the exponential map $\exp_G:\mathfrak{g}\to G$ is a diffeomorphism, this induces a family of dilations $\delta_\epsilon$ on $G$:
\begin{equation}
\delta_\epsilon := \exp_G \circ \mathfrak{d}_\epsilon\circ \exp_G^{-1}. \label{group-dil}
\end{equation}
This aligns with our earlier definition of dilations in subsection \ref{path-subsection}.

Suppose $H$ is a Carnot group isomorphic to $G$,  with stratification
\[
\mathfrak{h} = \mathfrak{h}_1\oplus \cdots \oplus \mathfrak{h}_r.
\]
A Lie group isomorphism $\varphi:G\to H$ induces a Lie algebra isomorphism $\varphi_*:\mathfrak{g}\to \mathfrak{h}$  that satisfies the following identity:
\begin{equation}
\exp_H\circ \varphi_*= \varphi\circ \exp_G.\label{nat-exp}
\end{equation} 
We say that a Lie group isomorphism $\varphi:G\to H$ \textbf{commutes with dilation} if 
\[
\varphi (\delta_\epsilon^G g) = \delta_\epsilon^H \varphi(g) \quad \text{for all } g\in G,\ \epsilon>0,
\]
where $\delta_\epsilon^G, \delta_\epsilon^H$ denote the dilations on $G$, $H$, respectively. 
If we  say that a Lie algebra isomorphism $f:\mathfrak{g}\to \mathfrak{h}$ commutes with dilation if 
\[
f(\mathfrak{d}_\epsilon^G X) = \mathfrak{d}_\epsilon^H f(X) \quad\text{for all } X\in \mathfrak{g}, \ \epsilon>0,
\]
it is easy to check using (\ref{group-dil}) and (\ref{nat-exp}) that an isomorphism $\varphi:G\to H$ commutes with dilations if and only if $\varphi_*:\mathfrak{g}\to \mathfrak{h}$ commutes with dilations.

\begin{example}
Let $G$ be a Carnot group.
Suppose $\mathcal{B}\subset \mathfrak{g}$ is a basis compatible with the stratification of $\mathfrak{g}$. 
Let $(\R^n,\odot)$ and $(\R^n,\star)$ be $G$ equipped with coordinates of the second and first kind, respectively, with respect to $\mathcal{B}$. 
Then $(\R^n,\odot) $ is isomorphic to $(\R^n,\star)$ via $\exp^{-1}\circ \Phi$ and coordinates.
Moreover, this isomorphism commutes with dilations.  
\end{example}

We say that an isomorphism $\varphi :G\to H$ is \textbf{strata-preserving} if 
\[
\varphi_*(\mathfrak{g}_j) = \mathfrak{h}_j \quad\text{for all } j=1,\ldots , r.
\]
Note that $\varphi$ is strata-preserving if and only if $\varphi^{-1}$ is strata-preserving.

The next result follows from  the use of  dilations:

\begin{lemma}\label{dil-strat-prop}
Let $G$, $H$ be isomorphic Carnot groups. 
An isomorphism $\varphi:G\to H$ commutes with dilations if and only if $\varphi$ is strata-preserving.
\end{lemma}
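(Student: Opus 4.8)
\textbf{Proof proposal for Lemma \ref{dil-strat-prop}.}

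The plan is to prove both implications directly, working at the level of Lie algebras via the equivalence already established in the excerpt: $\varphi$ commutes with dilations on the group if and only if $\varphi_*$ commutes with the algebra dilations $\mathfrak{d}_\epsilon$. So it suffices to show that a Lie algebra isomorphism $f=\varphi_*:\mathfrak{g}\to\mathfrak{h}$ commutes with $\mathfrak{d}_\epsilon$ if and only if $f(\mathfrak{g}_j)=\mathfrak{h}_j$ for all $j$.

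For the forward direction, suppose $f\circ\mathfrak{d}_\epsilon^G=\mathfrak{d}_\epsilon^H\circ f$ for all $\epsilon>0$. I would characterize the layer $\mathfrak{g}_j$ as the eigenspace of $\mathfrak{d}_\epsilon^G$ with eigenvalue $\epsilon^j$ (valid since $\mathfrak{d}_\epsilon^G$ is diagonal with eigenvalue $\epsilon^j$ precisely on $\mathfrak{g}_j$, and for $\epsilon\neq 0,1$ the values $\epsilon,\epsilon^2,\dots,\epsilon^r$ are distinct, so these eigenspaces are exactly the $\mathfrak{g}_j$). If $X\in\mathfrak{g}_j$, then $\mathfrak{d}_\epsilon^H f(X)=f(\mathfrak{d}_\epsilon^G X)=f(\epsilon^j X)=\epsilon^j f(X)$, so $f(X)$ lies in the $\epsilon^j$-eigenspace of $\mathfrak{d}_\epsilon^H$, which is $\mathfrak{h}_j$. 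Hence $f(\mathfrak{g}_j)\subseteq\mathfrak{h}_j$; applying the same argument to $f^{-1}$ (whose commuting property is immediate from that of $f$) gives the reverse inclusion, so $f(\mathfrak{g}_j)=\mathfrak{h}_j$ and $\varphi$ is strata-preserving.

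For the converse, suppose $f(\mathfrak{g}_j)=\mathfrak{h}_j$ for each $j$. Any $X\in\mathfrak{g}$ decomposes uniquely as $X=X_1+\cdots+X_r$ with $X_j\in\mathfrak{g}_j$. Then, using linearity of $f$ and the definition of the dilations on each side,
\[
f(\mathfrak{d}_\epsilon^G X)=f\Bigl(\sum_{j=1}^r \epsilon^j X_j\Bigr)=\sum_{j=1}^r \epsilon^j f(X_j)=\mathfrak{d}_\epsilon^H\Bigl(\sum_{j=1}^r f(X_j)\Bigr)=\mathfrak{d}_\epsilon^H f(X),
\]
where the third equality uses $f(X_j)\in\mathfrak{h}_j$. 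Thus $f$ commutes with dilations, hence so does $\varphi$. I do not anticipate a serious obstacle here; the only point requiring care is justifying that the layers are recovered as eigenspaces of a single $\mathfrak{d}_\epsilon$ (pick any fixed $\epsilon\in(0,1)$), which is what makes the forward direction work cleanly rather than having to manipulate the bracket-generating structure directly.
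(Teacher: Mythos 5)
Your proof is correct and follows the same route the paper intends (the paper itself only remarks that the lemma ``follows from the use of dilations''): reduce to the Lie algebra level via $\varphi_*$, identify each layer $\mathfrak{g}_j$ as the $\epsilon^j$-eigenspace of $\mathfrak{d}_\epsilon$ for a fixed $\epsilon\neq 0,1$, and check the converse by decomposing $X$ along the stratification. Your care in noting that a single fixed $\epsilon\in(0,1)$ suffices, and in passing to $f^{-1}$ to upgrade $f(\mathfrak{g}_j)\subseteq\mathfrak{h}_j$ to equality, supplies exactly the details the paper leaves implicit.
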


In fact, if we say that an isomorphism $\varphi:G\to H$ is \emph{contact} if $\varphi_*(\mathfrak{g}_1) = \mathfrak{h}_1$, it's easy to check from the stratifications of $\mathfrak{g}$ and $\mathfrak{h}$ that $\varphi$ is a contact map if and only if it is strata-preserving.

We will show weakly contact mappings are invariant under isomorphisms that commute with dilations.
We first prove that such isomorphisms are  biLipschitz.
\begin{proposition}\label{iso-loc-bil-prop}
Let $\varphi:(\R^n,\cdot)\to (\R^n,\ast)$ be an isomorphism between Carnot groups, that commutes with dilations. Then $\varphi$ is biLipschitz, i.e., there exists a constant $C$ such that 
\[
\frac{1}{C}d_{cc}^{(\R^n,\cdot)}(g,h) \le d_{cc}^{(\R^n,\ast)} (\varphi(g),\varphi(h)) \le C d_{cc}^{(\R^n,\cdot)}(g,h) \quad\text{for all } g,h\in (\R^n,\cdot). 
\] 

\end{proposition}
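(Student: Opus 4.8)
The plan is to exploit the homogeneity of the Carnot--Carath\'eodory metrics under dilations together with a compactness argument. Since $\varphi$ is a Lie group isomorphism, it is in particular a diffeomorphism of the underlying Euclidean spaces, and its differential $d\varphi_0$ at the identity is a linear isomorphism. The key structural input is that $\varphi$ commutes with dilations, so for every $\epsilon>0$ we have $\varphi \circ \delta_\epsilon = \delta_\epsilon \circ \varphi$, where the dilations are those on the respective groups. This is exactly the scaling symmetry that will let us pass from an infinitesimal (or local) estimate to a global one.

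First I would reduce to comparing $d_{cc}^{(\R^n,\cdot)}(0,g)$ with $d_{cc}^{(\R^n,\ast)}(0,\varphi(g))$ for all $g$: by left-invariance of both cc-metrics and the fact that $\varphi$ is a homomorphism (so $\varphi(g^{-1}\cdot h) = \varphi(g)^{-1}\ast \varphi(h)$), the general biLipschitz inequality follows from the case $g=0$. Next, I would use the dilation-commutation to reduce further to the unit sphere: given any $h\neq 0$, set $\epsilon = d_{cc}^{(\R^n,\cdot)}(0,h)$ and write $h = \delta_\epsilon(g)$ with $d_{cc}^{(\R^n,\cdot)}(0,g)=1$; then
\[
d_{cc}^{(\R^n,\ast)}(0,\varphi(h)) = d_{cc}^{(\R^n,\ast)}(0, \delta_\epsilon \varphi(g)) = \epsilon \cdot d_{cc}^{(\R^n,\ast)}(0,\varphi(g)),
\]
so it suffices to bound $d_{cc}^{(\R^n,\ast)}(0,\varphi(g))$ above and below by positive constants as $g$ ranges over the cc-unit sphere $S$ of $(\R^n,\cdot)$. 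Now $S$ is compact (it is closed and, by the Ball--Box Theorem, Euclidean-bounded), $\varphi$ is continuous, and $g \mapsto d_{cc}^{(\R^n,\ast)}(0,\varphi(g))$ is a continuous function on $S$ that is strictly positive (since $\varphi$ is injective and $\varphi(0)=0$), hence it attains a positive minimum and a finite maximum. Taking $C$ to be the larger of these bounds and its reciprocal gives the claim.

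The main obstacle, such as it is, is making sure the compactness/continuity ingredients are cleanly justified: that the cc-unit sphere of a Carnot group is compact (which follows from the Ball--Box Theorem, Theorem \ref{bb-theorem}, plus the fact that $d_{cc}$ is a genuine metric inducing the Euclidean topology by Proposition \ref{holder}), and that $d_{cc}^{(\R^n,\ast)}(0,\cdot)$ is continuous with respect to the Euclidean topology (again from Proposition \ref{holder} and the triangle inequality). Everything else is a formal consequence of the dilation symmetry. An alternative, slightly slicker route avoiding the sphere: observe that $\psi := \delta_{1/\epsilon} \circ \varphi \circ \delta_\epsilon = \varphi$ for all $\epsilon$, so the ``blow-down'' of $\varphi$ equals $\varphi$ itself; combined with the fact that any isomorphism that commutes with dilations is automatically Pansu-differentiable at $0$ with Pansu differential a graded group isomorphism, one gets the biLipschitz bound directly. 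I would present the compactness argument since it is elementary and self-contained, and only remark on the Pansu viewpoint.
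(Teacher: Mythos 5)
Your proof is correct, but it takes a genuinely different route from the paper's. The paper also begins by using dilation-commutation to reduce to a bounded set, but then it works infinitesimally: it bounds the operator norm of $d\varphi_g$ restricted to horizontal vectors uniformly over $B_{cc}(e,2)$ (via continuity of a frame-change map between $\{\varphi_*X^j\}$ and a left-invariant horizontal frame on the target), pushes horizontal paths forward to horizontal paths of controlled length --- which requires knowing $\varphi$ is strata-preserving (Lemma \ref{dil-strat-prop}) --- and then runs the same argument for $\varphi^{-1}$ to get the lower bound. Your argument instead reduces, via left-invariance and the homomorphism identity $\varphi(g^{-1}\cdot h)=\varphi(g)^{-1}\ast\varphi(h)$, to comparing $d_{cc}(0,g)$ with $d_{cc}(0,\varphi(g))$, then to the cc-unit sphere by homogeneity, and finishes by compactness (the sphere is compact by Theorem \ref{bb-theorem} and Proposition \ref{holder}) together with continuity and strict positivity of $g\mapsto d_{cc}^{(\R^n,\ast)}(0,\varphi(g))$. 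This is more elementary in that it uses no differentiability of $\varphi$ and no horizontality of $d\varphi$, only that $\varphi$ is a continuous group isomorphism commuting with dilations; it also yields both bounds simultaneously, with no separate pass through $\varphi^{-1}$. What the paper's derivative-based estimate buys is a more explicit constant and a setup that mirrors the pushforward-of-horizontal-vectors argument reused immediately afterward for Corollary \ref{cor-iso-weak-cont}. Both proofs are complete and correct.
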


\begin{proof}
As $\varphi$ commutes with dilations and the cc-metrics on $(\R^n,\cdot)$ and $(\R^n,\ast)$ are one-homogeneous, it suffices to show $\varphi$ is biLipschitz when restricted to $B_{cc}(e,1)$. 

Let $\{X^1,\ldots , X^{m_1}\}$, $\{Y^1,\ldots , Y^{m_1}\}$  be left-invariant  frames for $H(\R^n,\cdot)$, $H(\R^n,\ast)$, respectively.
 For each $g\in (\R^n,\cdot)$,  define the linear isomorphism $S_g: H_{\varphi(g)} (\R^n,\ast) \to H_{\varphi(g)} (\R^n,\ast)$ induced  by
$
(\varphi_*X^j)_{\varphi(g)} \mapsto Y^j_{\varphi(g)}.
$  
The function $g\mapsto ||S_g||$ is continuous, and hence, is bounded on $B_{cc}(e,2)$, say by $C$. 
This implies for all $g\in B_{cc}(e,2)$ and $v \in H_g(\R^n,\cdot)$,  we have
$
|d\varphi_g (v)|_{\varphi(g)}\le C |v|_g.
$
It then follows from Lemma \ref{dil-strat-prop} that 
\[
d_{cc}^{(\R^n,\ast)}(\varphi(g),\varphi(h)) \le Cd_{cc}^{(\R^n,\cdot)}(g,h) 
\]
for all $g,h\in B_{cc}(e,1)$. 
Applying this argument to $\varphi^{-1}$, the lemma follows.
\end{proof}

It follows from the chain rule that weak contactness is preserved by strata-preserving isomorphisms.

\begin{corollary}\label{cor-iso-weak-cont}
Fix $\Omega\subseteq \R^k$ an open subset. 
Let $\varphi:(\R^n,\cdot)\to (\R^n,\ast)$ be an isomorphism between Carnot groups, that commutes with dilations. If $f:\Omega\to (\R^n,\cdot)$ is  locally Lipschitz  and weakly contact, then $\varphi\circ f :\Omega\to (\R^n,\ast)$ is also locally Lipschitz and weakly contact. 
\end{corollary}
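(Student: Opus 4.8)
The plan is to combine Proposition \ref{iso-loc-bil-prop} with the chain rule. First I would observe that $\varphi\circ f$ is locally Lipschitz as a map into $(\R^n,\ast)$: by Proposition \ref{holder}, $f$ being locally Lipschitz into $\R^n$ together with the $C^{0,\frac12+}$-type hypothesis is not even needed here — the hypothesis already gives that $f$ is locally Lipschitz into $(\R^n,\cdot)$ in the cases we care about, but more directly, since $f$ is locally Lipschitz into $\R^n$ and weakly contact, Proposition \ref{carnot-lip-weak-prop} (applied on closed subcubes of $\Omega$) shows $f$ is locally Lipschitz as a map into $(\R^n,\cdot)$. Then Proposition \ref{iso-loc-bil-prop} gives that $\varphi$ is biLipschitz from $(\R^n,\cdot)$ to $(\R^n,\ast)$, so $\varphi\circ f$ is locally Lipschitz into $(\R^n,\ast)$, and hence, by Proposition \ref{holder} again, locally Lipschitz into $\R^n$. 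By Rademacher's theorem $\varphi\circ f$ is then differentiable $\mathcal H^k$-almost everywhere.

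Next I would verify the weak contact condition for $\varphi\circ f$. Since $f$ is differentiable a.e. as a map into $\R^n$ and $\varphi:\R^n\to\R^n$ is a smooth (polynomial, in coordinates) diffeomorphism, at every point $x$ where $f$ is differentiable the composition $\varphi\circ f$ is differentiable with
\[
d(\varphi\circ f)_x = d\varphi_{f(x)}\circ df_x .
\]
By the weak contactness of $f$, for a.e. $x$ we have $\operatorname{im} df_x\subseteq H_{f(x)}(\R^n,\cdot)$. By Lemma \ref{dil-strat-prop}, $\varphi$ commuting with dilations implies $\varphi$ is strata-preserving, so $\varphi_*(\mathfrak g_1)=\mathfrak h_1$; translating this to the horizontal bundles (which are left-invariant, $H_p = dL_p H_e$) and using the compatibility of $\varphi$ with left translations ($\varphi\circ L_g = L_{\varphi(g)}\circ\varphi$, since $\varphi$ is a homomorphism), we get
\[
d\varphi_{f(x)}\bigl(H_{f(x)}(\R^n,\cdot)\bigr) = H_{\varphi(f(x))}(\R^n,\ast).
\]
Combining the two displays, $\operatorname{im} d(\varphi\circ f)_x\subseteq H_{(\varphi\circ f)(x)}(\R^n,\ast)$ for a.e. $x$, which is exactly weak contactness of $\varphi\circ f$.

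The only mild subtlety — the step I would be most careful about — is the identity $d\varphi_{f(x)}(H_{f(x)}(\R^n,\cdot)) = H_{\varphi(f(x))}(\R^n,\ast)$, since the horizontal bundles on the two groups are defined using different chosen bases of the (different) first layers, and $\varphi_*$ need not send the chosen frame of $\mathfrak g_1$ to the chosen frame of $\mathfrak h_1$. But this does not matter: $H_p$ is the span of the chosen frame, which equals the value at $p$ of the left-invariant distribution integrating $\mathfrak g_1$, and $\varphi_*(\mathfrak g_1) = \mathfrak h_1$ (strata-preservation) together with the homomorphism property of $\varphi$ forces $d\varphi$ to carry the whole distribution $H(\R^n,\cdot)$ onto $H(\R^n,\ast)$ as subbundles, regardless of the frames. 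Everything else is a routine application of the chain rule and Rademacher's theorem.
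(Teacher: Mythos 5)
Your proposal is correct and follows essentially the same route as the paper: Proposition \ref{iso-loc-bil-prop} handles the locally Lipschitz claim, and Lemma \ref{dil-strat-prop} (strata-preservation) combined with the chain rule and the left-invariance of the horizontal bundles gives weak contactness. Your extra preliminary appeal to Proposition \ref{carnot-lip-weak-prop} to upgrade $f$ from Euclidean-Lipschitz to cc-Lipschitz, and your careful justification that $d\varphi$ carries $H(\R^n,\cdot)$ onto $H(\R^n,\ast)$ independently of the chosen frames, are sound elaborations of steps the paper leaves implicit.
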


\subsection{Coordinates of the second kind}\label{second-coord-section}

Now that we have defined dilations on $G$ and $\mathfrak{g}$, we can introduce coordinates of the second kind, another model for Carnot groups. The Carnot group that we obtain via this construction will be isomorphic to the coordinates of the first kind model we described in subsection \ref{first-coord-section}. 
We will first state a result that will allow us to define our other model.

\begin{theorem}\label{thm-second-coord}(\cite[Theorem 2.10.1]{LG:V})
Let $G$ be a Lie group with Lie algebra $\mathfrak{g}$. 
Suppose $\mathfrak{g}$ is the direct sum of linear subspaces $\mathfrak{h}_1 , \ldots, \mathfrak{h}_s$. 
Then there are open neighborhoods $B_i$ of $0$ in $\mathfrak{h}_i $ ($1\le i\le s$) and $U$ of $1$ in $G$, such that the map 
\[
\Psi:(Z_1,\ldots   , Z_s) \mapsto \exp Z_1\cdots \exp Z_s
\]
is an analytic diffeomorphism of $B_1\times \cdots \times B_s$ onto $U$. 
\end{theorem}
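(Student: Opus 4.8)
The statement is Theorem 2.10.1 of Varadarajan, so my plan is simply to reconstruct its standard proof via the inverse function theorem. The plan is to define the map
\[
\Psi: \mathfrak{h}_1\times\cdots\times\mathfrak{h}_s \to G, \qquad \Psi(Z_1,\ldots,Z_s) = \exp Z_1\cdots \exp Z_s,
\]
which is manifestly analytic since the exponential map and group multiplication are analytic. The goal is to show that its differential at the origin $(0,\ldots,0)$ is a linear isomorphism of $\mathfrak{h}_1\oplus\cdots\oplus\mathfrak{h}_s = \mathfrak{g}$ onto $T_1 G \cong \mathfrak{g}$; then the inverse function theorem (analytic version) produces the neighborhoods $B_i$ and $U$ and the analytic diffeomorphism, completing the proof.

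First I would compute $d\Psi_{(0,\ldots,0)}$. Restricting $\Psi$ to the $i$-th factor, i.e. setting all $Z_j=0$ for $j\neq i$, gives the curve $t\mapsto \exp(tZ_i)$, whose derivative at $t=0$ is $Z_i\in\mathfrak{g}$ identified with $T_1 G$. Hence the partial derivative of $\Psi$ in the $i$-th slot at the origin is the inclusion $\mathfrak{h}_i\hookrightarrow\mathfrak{g}$. Since $d\Psi_{(0,\ldots,0)}$ is the sum of these partials over $i$ (the other factors contributing the identity element, which is fixed), $d\Psi_{(0,\ldots,0)}$ is precisely the canonical linear map $\mathfrak{h}_1\oplus\cdots\oplus\mathfrak{h}_s\to\mathfrak{g}$ induced by the inclusions. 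By hypothesis this direct-sum decomposition is exactly $\mathfrak{g}$, so this map is the identity, in particular a linear isomorphism. Applying the inverse function theorem for analytic maps between analytic manifolds of the same dimension then yields open neighborhoods of $0$ in $\mathfrak{h}_1\times\cdots\times\mathfrak{h}_s$ and of $1$ in $G$ between which $\Psi$ restricts to an analytic diffeomorphism; shrinking to product form $B_1\times\cdots\times B_s$ with $B_i\subseteq\mathfrak{h}_i$ gives the stated conclusion.

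The only genuinely delicate point is the computation that the partials of $\Psi$ at the origin are the coordinate inclusions and that they assemble into $d\Psi_{(0,\ldots,0)}$ by linearity; this is where one must be careful about identifying $T_1 G$ with $\mathfrak{g}$ consistently and about the fact that multiplying by $\exp(0)=1$ on either side does not perturb the first-order term. Everything else — analyticity of $\Psi$, the inverse function theorem, passing to a product neighborhood — is routine. Since this is a citation to Varadarajan, I would likely keep the argument brief, noting that it is an immediate application of the inverse function theorem once one checks $d\Psi_{(0,\ldots,0)}=\mathrm{id}_{\mathfrak{g}}$.
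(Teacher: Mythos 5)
Your proposal is correct: the paper gives no proof of this statement, citing it directly from Varadarajan (Theorem 2.10.1 of \cite{LG:V}), and your reconstruction via the analytic inverse function theorem is exactly the standard argument given there. The key computation that $d\Psi_{(0,\ldots,0)}$ restricts on each factor to the inclusion $\mathfrak{h}_i\hookrightarrow\mathfrak{g}$, hence equals the identity under the direct-sum hypothesis, is right, and your remark that one may shrink the resulting neighborhood to product form $B_1\times\cdots\times B_s$ (whose image under the local diffeomorphism is then the open set $U$) correctly handles the only minor point the inverse function theorem does not hand you for free.
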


Fix a basis $\mathcal{B}= \{X^1,\ldots , X^n\}$  of $\mathfrak{g}$ compatible with the stratification, and define $\Phi:\mathfrak{g}\to G$  by 
\[
\Phi (a_1X^1 + \cdots   + a_nX^n ) = \exp (a_1X^1)\cdots \exp (a_nX^n).
\]
By Theorem \ref{thm-second-coord}, the restriction $\Phi|_V:V\to U$ is a diffeomorphism for some open neighborhoods  $V\subset \mathfrak{g}$ of $0$  and $U\subset G$ of $e$. 
After noticing
$
\Phi(a_1X^1+\cdots + a_nX^n) = \exp(a_1X^1\star \cdots \star a_nX^n),
$
it follows from (\ref{auto-dil})  and (\ref{group-dil})   that $\Phi$ is a global diffeomorphism.

 We can then define $\odot:\mathfrak{g}\times \mathfrak{g}\to \mathfrak{g}$ by
\[
X\odot Y  = \Phi^{-1} (\Phi(X)\Phi(Y)). 
\]
As in subsection \ref{first-coord-section}, we can identify $\mathfrak{g}$ with $\R^n$ and define a corresponding operation $\odot$ on $\R^n$, with a slight abuse of notation.
We say that $(\R^n, \odot)$ is a \textbf{normal model of the second kind} of $\mathfrak{g}$, and $(\R^n,\odot)$ is $G$ equipped with \textbf{coordinates of the second kind with respect to $\mathcal{B}$}.
Identifying $(\R^n,\star)$ with $\mathfrak{g}$ via the same basis, observe that  $\exp^{-1}\circ \Phi:( \R^n,\odot)\to (\R^n,\star)$ is a Lie group isomorphism.
 In particular, $(\R^n,\odot)$ is  isomorphic to $G$.
It then follows from Corollary \ref{cor-iso-weak-cont} that it suffices to prove each of  Theorems \ref{mf-main-theorem} and  \ref{step-2-3-main-theorem} for a single system  of coordinates. 

\section{Result for  $J^k(\R)$}\label{jkr-section}

\subsection{$J^k(\R)$ as Carnot groups}\label{jkr-carnot-subs}

We will only do our discussion in this section for jet spaces $J^k(\R)= J^k(\R,\R)$  (for $k\ge 1$)   to make things clearer.
Similar constructions can be used to define  more general jet spaces $J^k(\R^m,\R^n)$ (see \cite[Section 4]{JSA:W}).
The results in this paper concerning model filiform groups  translate into results for general jet spaces, and
I will note the more general results. 

Given $f,g\in C^k(\R)$, we say $f$ is equivalent to $g$ at $x\in \R$, and write $f\sim_x g$, if their $k^{th}$-order Taylor polynomials agree at $x$. 
Define
\[
J^k(\R) = \bigcup_{x\in \R} C^k(\R)/ \sim_x,
\]
and observe we have  global coordinates on $J^k(\R)$ by
\[
J^k(\R) \ni [f]_{\sim_x} \mapsto (x,u_k,\ldots  , u_0)\in \R^{k+2},
\]
where $u_j := f^{(k)}(x)$.

The horizontal bundle $HJ^k(\R)$ is defined pointwise by
\[
H_p J^k(\R) = \{ v\in T_p J^k(\R)| \ \omega_i (v) = 0, \ i = 0,\ldots , k-1\},
\]
where
\[
\omega_i:= du_i - u_{i+1} dx.
\]
In coordinates, $HJ^k(\R)$ is a $2$-dimensional tangent distribution on $J^k(\R)$ with global frame $\{X^{(k)}, \frac{d}{du_k}\}$, where 
\[
X^{(k)} = \frac{\partial}{\partial x} + u_k \frac{\partial}{\partial u_{k-1}} + \cdots + u_1\frac{\partial}{\partial u_0}.
\]
The nontrivial bracket relations are 
\[
\left[ \frac{\partial }{\partial u_j} , X^{(k)}\right] = \frac{\partial }{\partial u_{j-1}}, \quad j= 1, \ldots , k.
\]
 It follows that 
\[
Lie(J^k(\R)) = HJ^k(\R) \oplus \text{span}\left\{ \frac{\partial }{\partial u_{k-1}}\right\} \oplus \cdots \oplus
\text{span}\left\{ \frac{\partial }{\partial u_0}\right\}
\]
is a $(k+1)$-step stratified  Lie algebra.

One can use coordinates of the second kind to turn $J^k(\R)$ into a Carnot group with the following group operation:
\[
(x,u_k,\ldots ,u_0) \odot (y,v_k,\ldots , v_0) = (z,w_k, \ldots , w_0),
\]
where $z=x+y$, $w_k =u_k +v_k$, and 
\[
w_s = u_s+v_s + \sum_{j=s+1}^k u_j \frac{y^{j-s}}{(j-s)!}, \quad s=0,\ldots , k-1
\]
(see \cite[Example 4.3]{JSA:W}).
For $(x,u_k,\ldots ,u_0)\in J^k(\R)$, it is easy to show
\[
((x,u_k,\ldots , u_0)^{-1})_s = -\sum_{j=s}^k\frac{(-x)^{j-s}}{(j-s)!} u_j, \quad s=0,\ldots , k.
\]



\subsection{A horizontality result for  $J^k(\R)$}
In this section, we will prove  a horizontality condition for $J^k(\R)$, from which Theorem \ref{mf-main-theorem} will follow. We begin with a crucial lemma  concerning the group structure of $J^k(\R)$, similar to Corollary 1.3.18 of \cite{SLG:BLU}.

\begin{lemma}\label{jet-ind-lemma}
For $(x,u_k,\ldots , u_0) , $ $ (y,v_k,\ldots , v_0)\in J^k(\R)$,
\[
((x,u_k,\ldots , u_0) ^{-1}\odot(y,v_k,\ldots , v_0))_0=v_0 - u_0 - \sum_{j=1}^k \frac{u_j}{j!} (y-x)^j.
\]
\end{lemma}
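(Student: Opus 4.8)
The plan is to compute the zeroth coordinate of $(x,u_k,\ldots,u_0)^{-1}\odot (y,v_k,\ldots,v_0)$ directly from the explicit formulas for the inverse and the group law stated just above the lemma. Write $(x,u_k,\ldots,u_0)^{-1} = (-x, p_k,\ldots,p_0)$, where by the inverse formula
\[
p_s = -\sum_{j=s}^k \frac{(-x)^{j-s}}{(j-s)!}\, u_j, \qquad s=0,\ldots,k.
\]
Then apply the product formula with first factor $(-x,p_k,\ldots,p_0)$ and second factor $(y,v_k,\ldots,v_0)$; the $s=0$ slot of the product is
\[
w_0 = p_0 + v_0 + \sum_{j=1}^k p_j\,\frac{y^{j}}{j!}.
\]
Substituting the expression for each $p_j$ gives a double sum, and the whole computation reduces to showing that this double sum collapses to $v_0 - u_0 - \sum_{j=1}^k \frac{u_j}{j!}(y-x)^j$.

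First I would substitute and reorganize: $w_0 = v_0 + p_0 + \sum_{j=1}^k \frac{y^j}{j!} p_j$, and then expand each $p_j = -\sum_{i=j}^k \frac{(-x)^{i-j}}{(i-j)!} u_i$. Collecting the coefficient of a fixed $u_i$ (for $i=0,\ldots,k$) in $p_0 + \sum_{j=1}^k \frac{y^j}{j!}p_j$, the coefficient is $-\sum_{j=0}^{i} \frac{y^j}{j!}\cdot \frac{(-x)^{i-j}}{(i-j)!}$, where the $j=0$ term uses the convention $y^0/0! = 1$ coming from $p_0$. By the binomial theorem, $\sum_{j=0}^i \frac{y^j}{j!}\frac{(-x)^{i-j}}{(i-j)!} = \frac{1}{i!}\sum_{j=0}^i \binom{i}{j} y^j (-x)^{i-j} = \frac{(y-x)^i}{i!}$. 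Hence the coefficient of $u_i$ is $-\frac{(y-x)^i}{i!}$, and summing over $i$ yields $w_0 = v_0 - \sum_{i=0}^k \frac{u_i}{i!}(y-x)^i = v_0 - u_0 - \sum_{i=1}^k \frac{u_i}{i!}(y-x)^i$, which is exactly the claim. (Note the $i=0$ term is just $-u_0$, consistent with the stated formula.)

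The only real subtlety — not so much an obstacle as a bookkeeping point — is aligning the index conventions between the inverse formula and the product formula: the product formula's sum $\sum_{j=s+1}^k u_j \frac{y^{j-s}}{(j-s)!}$ must be applied with $u_j$ replaced by the components $p_j$ of the inverse, and one must be careful that the ``diagonal'' term $p_0$ (which has no $y$-factor) correctly supplies the $j=0$ term of the binomial expansion. Once the summation ranges are set up correctly, the collapse is an immediate application of the binomial theorem applied termwise in $u_i$. I would present this as: state the two formulas, substitute, swap the order of summation to group by $u_i$, invoke the binomial theorem, and read off the result. An alternative, if one prefers to avoid the double-sum manipulation, is to induct on $k$ or to argue via the known fact (cf. Corollary 1.3.18 of \cite{SLG:BLU}) that in coordinates of the second kind the bottom-layer coordinate of $g^{-1}\odot h$ depends only on the lowest-order data plus the horizontal displacement $y-x$; but the direct binomial computation is cleanest here.
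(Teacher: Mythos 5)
Your proof is correct and follows essentially the same route as the paper's: substitute the explicit inverse formula into the last coordinate of the product formula, interchange the order of summation to collect the coefficient of each $u_j$, and collapse the inner sum via the binomial theorem. No meaningful differences.
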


\begin{proof}
Recall
\[
((x,u_k, \ldots , u_0)^{-1})_s = - \sum_{j=s}^k \frac{ (-x)^{j-s} }{(j-s)!} u_j, \quad s= 0,\ldots , k,
\]
and the last coordinate of $(x,u_k,\ldots , u_0) \odot (y,v_k,\ldots , v_0)$ is 
\[
((x,u_k,\ldots , u_0)\odot ( y,v_k,\ldots , v_0))_0=v_0+ \sum_{s=0}^k \frac{y^s}{s!} u_s.
\]
Thus, 
\begin{align*}
((x,u_k,\ldots , u_0)^{-1}\odot ( y,v_k,\ldots , v_0))_0 &= v_0 - \sum_{s=0}^k \sum_{j=s}^n \frac{y^s}{s!} \cdot 
\frac{(-x)^{j-s}}{(j-s)!} \cdot u_j\\
&= v_0 - \sum_{j=0}^k \sum_{s=0}^j \binom{j}{s} y^s(-x)^{j-s} \cdot \frac{u_j}{j!}\\
&= v_0 - \sum_{j=0}^k \frac{1}{j!}\cdot (y-x)^j u_j,
\end{align*}
where the last equality comes from the Binomial Theorem.
\end{proof}

\begin{remark}\label{gen-jet-space-rem}
The same reasoning using the Multinomial Theorem gives us the following generalization for all jet spaces:

Fix positive integers $k,m,n$. 
Let the notation for $J^k(\R^m,\R^n)$ be as in Warhurst (see \cite[Subsection 4.4]{JSA:W}), and 
equip $J^k(\R^m,\R^n)$ with the group operation arising from coordinates of the second kind (see  \cite[subsection 4.4]{JSA:W}).
Given  $(x,u^{(k)}), (y,v^{(k)}) \in J^k(\R^m,\R^n)$, 
\[
((x,u^{(k)})^{-1}\odot (y,v^{(k)}))^l_0= v_0^l-\sum_{I\in \tilde{I}(m)}\frac{u_I^l}{I!} (y-x)^I, \quad l=1,\ldots n.
\]
Here, for  $I =(i_1,\ldots , i_m)\in \tilde{I}(m)$ and $z=(z_1,\ldots , z_m)\in \R^m$, we define
$I!= i_1!\cdots i_m!$ and $z^I = z_1^{i_1}\cdots z_m^{i_m}$. 
\end{remark}

\begin{proposition}\label{mf-diff-prop}
Let $k,$ $ n$ be positive integers with  $\Omega\subseteq \R^n$ an open set. 
Suppose that $f= (f^x, f^{u_k}, \ldots , f^{u_0}):\Omega\to J^k(\R)$ is of class $C^{0,\frac{1}{2}+}.$
If the component $f^x $ is differentiable at a point $p_0\in \Omega$, then the components $f^{u_{k-1}},f^{u_{k-2}}, \ldots , f^{u_0}$
are also differentiable at $p_0$ with 
\[
df_{p_0}^{u_j} = f^{u_{j+1}}(p_0) df_{p_0}^x
\]
for all $j=0,\ldots , k-1$.
In particular, if $f^{u_k}$ is also differentiable at $p_0$, 
then the image of $df_{p_0}$ lies in the horizontal space $H_{f(p_0)}J^k(\R)$.
\end{proposition}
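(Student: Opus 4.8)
The plan is to isolate the last coordinate and run an induction down the tower of coordinates $u_{k-1}, u_{k-2}, \ldots, u_0$, using Lemma \ref{jet-ind-lemma} to convert the $C^{0,\frac12+}$ estimate in the cc-metric into a pointwise differentiability statement for each component. First I would fix $p_0\in\Omega$ at which $f^x$ is differentiable, and for $p$ near $p_0$ write $h = p - p_0$. Applying Corollary \ref{carnot-bb-cor} to the point $f(p_0)^{-1}\odot f(p)$ gives, for each $j = 0, \ldots, k-1$,
\[
\bigl| (f(p_0)^{-1}\odot f(p))_{u_j} \bigr| \le C\, d_{cc}(f(p_0), f(p))^{\,k-j+1} \le C\, |h|^{(k-j+1)/2}\,\beta(|h|)^{k-j+1},
\]
where $\beta$ is the modulating homeomorphism and $d_{cc}(f(p_0),f(p)) = d_{cc}(f(p_0),f(p))$ is controlled by the $C^{0,\frac12+}$ inequality \eqref{calpha-ineq} together with $|h| = d_{\R^n}(p_0,p)$. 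This is the heart of why the exponent $\tfrac12$ matters: for $j = k-1$ the bound is $o(|h|)$ since $(k-j+1)/2 = 1$ and $\beta(|h|)\to 0$, which is exactly first-order smallness.

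The base case of the induction is $j = k-1$. By Lemma \ref{jet-ind-lemma} (reading off the $u_0$-coordinate of the relevant group but more directly the analogous formula for $u_{k-1}$, which is simply $w_{k-1} = v_{k-1} - u_{k-1} - u_k(y-x)$ after accounting for the inverse), we get
\[
f^{u_{k-1}}(p) - f^{u_{k-1}}(p_0) - f^{u_k}(p_0)\bigl(f^x(p) - f^x(p_0)\bigr) = o(|h|).
\]
Since $f^x$ is differentiable at $p_0$, $f^x(p) - f^x(p_0) = df^x_{p_0}(h) + o(|h|)$, so substituting yields $f^{u_{k-1}}(p) - f^{u_{k-1}}(p_0) = f^{u_k}(p_0)\,df^x_{p_0}(h) + o(|h|)$, i.e. $f^{u_{k-1}}$ is differentiable at $p_0$ with $df^{u_{k-1}}_{p_0} = f^{u_k}(p_0)\,df^x_{p_0}$. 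For the inductive step, suppose $f^{u_{k}}, f^{u_{k-1}}, \ldots, f^{u_{j+1}}$ are all differentiable at $p_0$ (the top one being the hypothesis on $f^{u_k}$ only in the final "in particular" clause, so for $j \le k-2$ this is genuinely the inductive hypothesis and $f^{u_{j+1}}$ is continuous at $p_0$ in particular). The exact group-law expansion of $(f(p_0)^{-1}\odot f(p))_{u_j}$ — a version of Lemma \ref{jet-ind-lemma} shifted up the tower — expresses $f^{u_j}(p) - f^{u_j}(p_0)$ as $f^{u_{j+1}}(p_0)(f^x(p) - f^x(p_0))$ plus terms that are products of already-controlled small quantities (powers $(f^x(p)-f^x(p_0))^i$, $i\ge 2$, times bounded coefficients, or differences $f^{u_i}(p) - f^{u_i}(p_0)$ for $i > j$) plus the $o(|h|)$ error from the cc-estimate above; all of these are $o(|h|)$, giving differentiability of $f^{u_j}$ with $df^{u_j}_{p_0} = f^{u_{j+1}}(p_0)\,df^x_{p_0}$.

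Finally, if $f^{u_k}$ is also differentiable at $p_0$, then $f$ is differentiable at $p_0$ as a map into $\R^{k+2}$, and for each $i = 1, \ldots, n$ the partial derivative is
\[
\partial_i f(p_0) = \bigl(\partial_i f^x(p_0),\ \partial_i f^{u_k}(p_0),\ f^{u_k}(p_0)\partial_i f^x(p_0),\ \ldots,\ f^{u_1}(p_0)\partial_i f^x(p_0)\bigr),
\]
which annihilates every contact form $\omega_s = du_s - u_{s+1}\,dx$ at $f(p_0)$; hence $\partial_i f(p_0) \in H_{f(p_0)}J^k(\R)$, and by the criterion quoted from \cite{PMA:R} in subsection \ref{weak-cont-subs}, $\operatorname{im} df_{p_0} \subseteq H_{f(p_0)}J^k(\R)$.

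I expect the main obstacle to be bookkeeping in the inductive step: writing down the precise polynomial identity for $(f(p_0)^{-1}\odot f(p))_{u_j}$ (the shifted analogue of Lemma \ref{jet-ind-lemma}) and verifying that every term other than the linear one $f^{u_{j+1}}(p_0)(f^x(p)-f^x(p_0))$ is $o(|h|)$ — this requires combining the cc-estimates (which give $|f^{u_i}(p) - f^{u_i}(p_0)| \lesssim |h|^{(k-i+1)/2}\beta(|h|)^{k-i+1}$ for $i \ge 1$, and $|f^x(p) - f^x(p_0)| \lesssim |h|^{1/2}\beta(|h|)$) with the observation that any product of such factors appearing in the group law has total "weight" at least $k - j + 1 \ge 2$ in the dilation grading, hence is $o(|h|)$. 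Once the weight-counting is set up cleanly, the estimates are routine.
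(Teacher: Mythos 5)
Your proposal is correct and follows essentially the same route as the paper: the same group-law identity (Lemma \ref{jet-ind-lemma}), the same Ball-Box estimate (Corollary \ref{carnot-bb-cor}), and the same crucial use of $\beta\to 0$ to get $o(|h|)$ precisely when the exponent $(k-j+1)/2$ equals $1$. The only difference is organizational --- the paper inducts on $k$ using the $1$-Lipschitz projection $\pi:J^{k+1}(\R)\to J^k(\R)$ so that only the bottom coordinate ever needs the lemma, whereas you treat each $u_j$ directly via the shifted identity $(q^{-1}\odot p)_{u_j}=v_j-u_j-\sum_{i>j}\frac{u_i}{(i-j)!}(y-x)^{i-j}$; note that this expansion involves only $f^{u_i}(p_0)$ and powers of $f^x(p)-f^x(p_0)$, not the differences $f^{u_i}(p)-f^{u_i}(p_0)$ you mention, so your induction on $j$ is in fact unnecessary and each coordinate can be handled independently.
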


\begin{proof}
We prove this result by induction on $k\ge 1$. Below, $p$ is a point in $\Omega$. 

Let $f= (f^x,f^{u_1}, f^{u_0}):\Omega \to J^1(\R)$ be given of class $C^{0,\frac{1}{2}+}$.
Choose a map $\beta$ for $f$ satisfying (\ref{calpha-ineq}).
By Lemma \ref{jet-ind-lemma}, 
\[
(f(p_0)^{-1}f(p))_0= f^{u_0}(p)-f^{u_0}(p_0)- f^{u_1}(p_0)(f^{x}(p)-f^{x}(p_0)).
\]
Thus by Corollary \ref{carnot-bb-cor}, there exists $C>0$ such that
\begin{align*}
|f^{u_0}(p)-f^{u_0}(p_0)- f^{u_1}(p_0)(f^{x}(p)-f^{x}(p_0))|^{1/2} &\le Cd_{cc}(f(p),f(p_0))\\ & \le C \beta(|p-p_0|) \cdot |p-p_0|^{1/2}.
\end{align*}
We have
\begin{align*}
&|f^{u_0}(p) - f^{u_0}(p_0) - f^{u_1}(p_0)df_{p_0}^x(p-p_0)|\\
&\qquad \le C^2\beta^2(|p-p_0|) \cdot |p-p_0| + |f^{u_1}(p_0)(f^{x}(p)-f^{x}(p_0))-  f^{u_1}(p_0)df_{p_0}^x(p-p_0)|\\
&\qquad=o(|p-p_0|),
\end{align*}
where we used the differentiability of $f^x$ at $p_0$  for the last equality.

Suppose we have proven the result up to  $k$.
Let $f= (f^x,f^{u_{k+1}}, \ldots , f^{u_0}):\Omega\to J^{k+1}(\R)$ be given  of class $C^{0,\frac{1}{2}+}$
with $f^x$ differentiable at $p_0$.
Let $\tilde{\beta}$ be a map satisfying (\ref{calpha-ineq}) for $f$. 
Define the projection
 $\pi:J^{k+1}(\R)\to J^k(\R)$ by 
\[
 \pi(x,u_{k+1},\ldots , u_0) = (x,u_{k+1},\ldots , u_1).
\]
As $\pi$ maps horizontal curves to horizontal curves of the same length, it's not hard to see that $\pi$ is
a contraction. This implies $\pi\circ f= (f^x,f^{u_{k+1}}, \ldots , f^{u_1})$ is of class
$C^{0,\frac{1}{2}+}(\Omega,J^k(\R))$. By induction, $f^{u_{k}},\ldots , f^{u_1}$ are differentiable at $p_0$ 
with 
\[
df_{p_0}^{u_j} = f^{u_{j+1}}(p_0) df_{p_0}^x
\]
for all $j=1,\ldots ,k$. 

It remains to show $f^{u_0}$ is also differentiable at $p_0$ with 
\[
df_{p_0}^{u_0} = f^{u_1}(p_0)df_{p_0}^x.
\]

 Lemma \ref{jet-ind-lemma} and Corollary \ref{carnot-bb-cor} combine to imply
\[
\left|f^{u_0}(p) - f^{u_0}(p_0)- \sum_{j=1}^{k+1} \frac{f^{u_j}(p_0)}{j!} (f^x(p)-f^x(p_0))^j\right|^{1/(k+1)}\le C\tilde{\beta}(|p-p_0|)\cdot
|p-p_0|^{1/2}.
\]
Moreover, as $f^x$ is differentiable at $p_0$, 
\[
f^x(p)-f^x(p_0) = O(|p-p_0|),
\]
and hence
\[
|f^x(p)-f^x(p_0)|^j= o(|p-p_0|) \quad\text{for all } j\ge 2.
\]
It follows 
\begin{align*}
&|f^{u_0}(p)-f^{u_0}(p_0) - f^{u_1}(p_0) df^x_{p_0}(p-p_0)|\\
&\qquad \le C^{k+1}\tilde{\beta}^{k+1}(|p-p_0|)\cdot |p-p_0|^{\frac{k+1}{2}} + |f^{u_1}(p_0)(f^x(p)-f^x(p_0))-
f^{u_1}(p_0)df^x_{p_0}(x-x_0)| \\ &\qquad\qquad +
\sum_{j=2}^{k+1} \left|\frac{f^{u_j}(p_0)}{j!} (f^x(p)-f^x(p_0))\right|^j \\
&\qquad = o(|x-x_0|).
\end{align*}
This proves $f^{u_0}$ is differentiable at $p_0$ with 
\[
df^{u_0}_{p_0}=f^{u_1}(p_0)df_{p_0}^x,
\]
and the proposition follows.
\end{proof}

\begin{remark}
In the above proof, we needed $f$ to lie in $C^{0,\frac{1}{2}+}(\Omega,J^k(\R))$ in order to ensure $f^{u_{k-1}}$ was differentiable at the point with the desired form. To prove the differentiability of the components of $f$ corresponding to higher layers, one can assume lower regularity. In fact, 
the above proof shows the following:

Assume $\Omega\subseteq \R^n$ is open and $j\ge 2$. Suppose $f= (f^x,f^{u_k},\ldots , f^{u_0}):\Omega \to J^k(\R)$
is of class $C^{0,\frac{1}{j}+}$. If $f^x$ is differentiable at a point $p_0\in \Omega$, then $f^{u_{k+1-j}},
f^{u_{k-j}}, \ldots , f^{u_0}$ are also differentiable at $p_0$ with 
\[
df^{u_l}_{p_0} = f^{u_{l+1}}(p_0) df_{p_0}^x, \quad l = k+1-j,\ldots ,0.
\]

\end{remark}

\subsection{Proof of Theorem   \ref{mf-main-theorem}}\label{mf-main-subs}

Before we prove Theorem \ref{mf-main-theorem}, we will give an example of a locally $\frac{1}{2}$-H\"older map  $f:\R^2\to J^k(\R)$ that is Lipschitz as a map into $\R^{k+2}$. Comparing with Remark \ref{alphapluslimit}, this suggests that our result is sharp in the case $n=2$.

\begin{example}\label{sharpness-example}
Define $f:\R^2 \to J^k(\R)$ by 
\[
f(x,y) =  (0,x,y,0,\ldots , 0).
\]
Then $f$ is Lipschitz (in fact, is an isometry) as a map into $\R^{k+2}$. 

To show $f$ is locally $\frac{1}{2}$-H\"older,  first note in $J^k(\R)$,
\[
(0,-x_1,-y_1,0,\ldots , 0)\odot (0,x_2,y_2,0,\ldots ,0)=(0,x_2-x_1,y_2-y_1,0,\ldots , 0).
\]
By Corollary \ref{carnot-bb-cor}, there exists a constant $C$ such that
\[
 d_{cc}(f(x_1,y_1),f(x_2,y_2)) \le C\max\{|x_2-x_1|, |y_2-y_1|^{1/2}\}
\]
for all $(x_1,y_1),(x_2,y_2)\in \R^2$. 
By considering cases, one can   then show 
\[
d_{cc}(f(x_1,y_1),f(x_2,y_2))\le \sqrt{2M}C |(x_1,y_1)-(x_2,y_2)|^{1/2}
\]
whenever $(x_1,y_1),(x_2,y_2)\in [-M,M]^2$ with $M>1$.

\end{example}

\begin{proof}[Proof of Theorem \ref{mf-main-theorem}]
Fix positive integers $n,k$ with $n\ge 2$. 
Suppose  $f:\Omega\to J^k(\R)$ is of class $C^{0,\frac{1}{2}+}$ and
is locally Lipschitz as a map into $\R^{k+2}$.
By Rademacher's Theorem, each of the components of $f$ is differentiable almost everywhere, and in particular, $f^x$ is differentiable almost everywhere.
Proposition \ref{mf-diff-prop} then implies that $f$ is weakly contact. 
Since $J^k(\R)$ is purely $n$-unrectifiable \cite[Theorem 1.1]{UAR:M}, Theorem \ref{mf-main-theorem} in the case of second kind coordinates follows from Proposition \ref{carnot-lip-weak-prop}. 
The discussion at the end of subsection \ref{second-coord-section} then proves the result for coordinates of the first kind.
\end{proof}

\begin{remark}
Observe that  $J^k(\R^m,\R^n)$ is purely $j$-unrectifiable if $j>\binom{m+k-1}{k}$ \cite[Theorem 1.1]{UAR:M}.
Hence, from Remark \ref{gen-jet-space-rem}, one can use similar reasoning  to show the following generalization:

Fix a jet space $J^k(\R^m,\R^n)$ and equip it with the group structure from Subsection 4.4 of \cite{JSA:W}. Suppose  $j>\binom{m+k-1}{k}$ and $\Omega$  is an open subset of $\R^j$.
If $N $ is the topological dimension of $J^k(\R^m,\R^n)$, there is no injective mapping in the class
$C^{0,\frac{1}{2}+}(\Omega; J^k(\R^m,\R^n))$ that is also locally Lipschitz when considered as a map into $\R^{N}$.
\end{remark}

\begin{remark}
Theorem \ref{mf-main-theorem} has an easier proof if we assume $ n<\frac{1}{2}\left(1+ \frac{(k+1)(k+2)}{2}\right).$
Making this assumption,
suppose that $f:\Omega\to J^k(\R)$ is injective and of class $C^{0,\frac{1}{2}+}$.
Let $B(x,r)$ be an open  ball with $\overline{B(x,r)}\subseteq \Omega$.
Then the restriction $f|_{\overline{B(x,r)}}$ is injective and of class $C^{0,\frac{1}{2}+}(\overline{B(x,r))},
J^k(\R))$. Since $\overline{B(x,r)}$ is bounded, it follows that $f|_{\overline{B(x,r)}} $ is 
a $\frac{1}{2}$-H\"older homeomorphism.
In particular, $f(B(x,r)) $ is open in $J^k(\R)$, which implies
\[
\text{dim}_{\text{Hau}}f(B(x,r)) = \text{dim}_{\text{Hau}} J^k(\R) = 1+\frac{(k+1)(k+2)}{2}.
\]
But as $f$ is $\frac{1}{2}$-H\"older,
\[
\text{dim}_{\text{Hau}}f(B(x,r))  \le 2 \cdot\text{dim}_{\text{Hau}} B(x,r) = 2n,
\]
which is a contradiction.
\end{remark}

\section{Result for Carnot groups of  step at most three}

\subsection{Geometry of step two Carnot groups}\label{step-2-setup}

In this section, we will consider the geometry of Carnot groups of step two  and equip
these groups with coordinates of the first kind.

Fix a step two Carnot group $G$ with Lie algebra $\mathfrak{g}$. Writing 
$
\mathfrak{g}= \mathfrak{g}_1\oplus \mathfrak{g}_2,
$
let $d_1,\ldots , d_r$ be a basis for $\mathfrak{g}_1$ and $e_1,\ldots , e_s$ be a basis for $\mathfrak{g}_2$.
We can write 
\[
[d_i,d_j] = \sum_{k=1}^s \alpha_k^{ij} e_k
\]
for some structural constants $\alpha_k^{ij}$, with all other bracket relations  trivial.
By antisymmetry,
$
\alpha_{k}^{ij}  =- \alpha_k^{ji}
$ for all $i,$ $j,$ and $k$.
In fact, Bonfiglioli, Lanconelli, and Ugozzoni prove that there exists a Carnot group of step two with these bracket relations if and only if the skew-symmetric matrices $(\alpha_k^{ij})$, $k=1,\ldots , s$, are linearly independent \cite[Proposition 3.2.1]{SLG:BLU}. 

Using the procedure described in subsection \ref{first-coord-section}, we can identify $G$ with $\R^{r+s}$ equipped with the following multiplication via coordinates of the first kind:
\[
(A_1,\ldots  ,A_r,B_1,\ldots ,B_s) \star (a_1,\ldots , a_r,b_1,\ldots , b_s) = (\mathcal{A}_1,\ldots , \mathcal{A}_r, \mathcal{B}_1,\ldots , \mathcal{B}_s),
\]
where
\[
\mathcal{A}_i = A_i +a_i, \quad \mathcal{B}_k = B_k+b_k +\frac{1}{2} \sum_{1\le i <j \le r} \alpha_k^{ij}(A_ia_j-a_iA_j).
\]
We write $(A_1,\ldots  ,A_r,B_1,\ldots ,B_s) = (A_i, B_k)$ henceforth,
and we will use similar notation for step $3$ Carnot groups. 

By translating the canonical basis at the basis, we obtain  the left-invariant vector fields
\begin{align*}
&X^i := \frac{\partial}{\partial A_i} + \frac{1}{2} \sum_{k=1}^s \left( \sum_{j< i }\alpha_k^{ji} A_j - \sum_{j> i} \alpha_k^{ij}A_j\right) \frac{\partial}{\partial B_k}, \quad i=1,\ldots , r,\\
&Y^k := \frac{\partial}{\partial B_k}, \quad  k = 1,\ldots , s.
\end{align*}
We obtain the stratification
\[
Lie(\R^{r+s},\star)  = \langle X^i\rangle_{1\le i\le r}\oplus \langle Y^k\rangle_{1\le k \le s}
\]
\cite[Remark 1.4.8]{SLG:BLU}.
In fact, it is easy to check that  the linear map $\varphi: Lie(\R^{r+s},\star)\to \mathfrak{g}$ induced by $X^i\mapsto d_i$, $Y^k \mapsto e_k$ is a Lie algebra isomorphism. 

The contact forms, satisfying
\[
H(\R^{r+s},\star)= \bigcap_{k=1}^s \ker \omega^k,
\]
 are given by
\[
\omega^k := dB_k - \frac{1}{2}\sum_{i=1}^r \left( \sum_{j<i }\alpha_k^{ji} A_j - \sum_{j>i} \alpha_k^{ij}A_j\right) dA_i.
\]
In other words, if    $v\in T_{p}(\R^{r+s},\star)$,   then 
\[
v\in H_p(\R^{r+s},\star)\qquad \Longleftrightarrow \qquad \omega^k_p(v) = 0 \ \text{for all } k=1,\ldots ,s.
\]

\subsection{Geometry of step three Carnot groups}\label{step-3-setup}

Let $G$ be a step three Carnot group. 
Let $d_1,\ldots  , d_r$ be a basis for $\mathfrak{g}_1$, $e_1,\ldots , e_s$ a basis for $\mathfrak{g}_2$,
and $f_1,\ldots , f_t$ a basis for $\mathfrak{g}_3$. We  write 
\begin{align*}
&[d_i,d_j] = \sum_{k=1}^s \alpha_k^{ij} e_k\\
&[d_i , e_k] = \sum_{m=1}^t \beta_m^{ik} f_m
\end{align*}
with all other bracket relations trivial. 

As in the step two case, we can identify $G$ with $\R^{r+s+t}$ equipped
with the following operation via coordinates of the first kind:
\[
(A_i,B_k , C_m ) \star (a_i,b_k,c_m) = (\mathcal{A}_i,\mathcal{B}_k,\mathcal{C}_m),
\]
where
\begin{align*}
&\mathcal{A}_i = A_i+a_i\\
&\mathcal{B}_k = B_k+b_k + \frac{1}{2} \sum_{i<j} \alpha_k^{ij}  (A_ia_j-a_iA_j)\\
&\mathcal{C}_m = C_m + c_m +\frac{1}{2} \sum_{i,j} \beta_m^{ij}(A_ib_j -B_ja_i) +\frac{1}{12} \sum_{l,k} \sum_{i<j} (A_l-a_l) \alpha_k^{ij} (A_ia_j-a_iA_j)\beta_{m}^{lk} .
\end{align*}

Observe $(A_i,B_k,C_m)^{-1}= (-A_i,-B_k,-C_m)$ and
\[
(A_i,B_k,C_m)^{-1} \star (a_i,b_k,c_m) = (\tilde{\mathcal{A}_i}, \tilde{\mathcal{B}_k}, \tilde{\mathcal{C}_m}),
\]
where
\begin{align}
&\tilde{\mathcal{A}_i }= a_i-A_i \nonumber \\
&\tilde{\mathcal{B}_k }= b_k-B_k- \frac{1}{2} \sum_{i<j} \alpha_k^{ij}  (A_ia_j-a_iA_j) \label{step-3-ops} \\
&\tilde{\mathcal{C}_m }= c_m-C_m-\frac{1}{2} \sum_{i,j} \beta_m^{ij}(A_ib_j -B_ja_i) +\frac{1}{12} \sum_{l,k}\sum_{i<j} (A_l+a_l) \alpha_k^{ij} (A_ia_j-a_iA_j)\beta_{m}^{lk} . \nonumber
\end{align}

Left-translating  the canonical basis at the origin, we obtain the left-invariant vector fields
\begin{align*}
&X^i = \frac{\partial}{\partial A_i } + \sum_{k=1}^s \frac{1}{2}\left( \sum_{j<i} \alpha_k^{ji}A_j - \sum_{j>i}\alpha_k^{ij}A_j\right)\frac{\partial}{\partial B_k} \\& \ \qquad\qquad+ \sum_{m=1}^t \left[ -\frac{1}{2} \sum_{j=1}^s B_j\beta_m^{ij} +\frac{1}{12} \sum_{l=1}^r \sum_{k=1}^s A_l\left( \sum_{j<i} \alpha_k^{ji} A_j -\sum_{j>i} \alpha_k^{ij} A_j\right) \beta_m^{lk} \right] \frac{\partial }{\partial C_m} ,\\
& Y^k = \frac{d}{\partial B_k} + \sum_{m=1}^t\left(\frac{1}{2} \sum_{i=1}^r\beta_m^{ik} A_i \right)\frac{\partial}{\partial C_m},\\
&Z^m = \frac{\partial}{\partial C_m}. 
\end{align*}
It is clear that $\{X^i\}_i\cup \{Y^k\}_k\cup \{Z^m\}_m$ forms a basis for $Lie(\R^n,\star)$. 
Moreover, we have the expected step three stratification of $Lie(\R^n,\star)$ \cite[Remark 1.4.8]{SLG:BLU}:
\begin{equation}
Lie(\R^{r+s+t},\star) = \langle X^i \rangle_{1\le i \le r}\oplus \langle Y^k\rangle_{1\le k \le s} \oplus \langle Z^m\rangle_{1\le m \le t}\label{step-3-strat}
\end{equation}
In fact, one can show using the Jacobi identity that the linear map $\varphi:Lie(\R^n,\star)\to \mathfrak{g}$ induced by
\[
X^i\mapsto d_i , \quad Y^k\mapsto e_k, \quad Z^m\mapsto f_m
\]
is a Lie algebra isomorphism. 

The contact forms are given by
\begin{align*}
& \omega^k_1 :=dB_k - \sum_{i=1}^r \frac{1}{2}\left(  \sum_{j<i} \alpha_k^{ji} A_i -\sum_{j>i} \alpha_k ^{ij} A_j\right)  dA_i\\
& \omega^m_2:= dC_m - \sum_{i=1}^r \left[ -\frac{1}{2} \sum_{j=1}^s B_j \beta_m^{ij} +\frac{1}{12} \sum_{l=1}^r 
\sum_{k=1}^s A_l (\sum_{j<i} \alpha_k^{ji} A_j - \sum_{j>i} \alpha_k^{ij}A_j )\beta_m^{lk} \right] dA_i.
\end{align*}
We have
\[
H(\R^{r+s+t},\star) = \bigcap_{k=1}^s \ker\omega^k_1 \cap \bigcap_{m=1}^t \ker\omega^m_2,
\]
so that
a tangent vector $v$  lies in $H_p(\R^{r+s+t},\star)$ if and only if $(\omega_1^k)_p (v) = (\omega_2^m)_p(v) =0$
for all $k$ and $m$.



\subsection{Result for step two Carnot groups}\label{step-2-results-section}

In this subsection, we will  consider step two Carnot groups $G$ using the notation from subsection \ref{step-2-setup}.
Recall that we identity $G$ with $\R^{r+s}$ equipped with an operation arising from coordinates of the first kind:
\[
(A_i,B_k)\star (a_i,b_k)= (\mathcal{A}_i,\mathcal{B}_k),
\]
where
\[
\mathcal{A}_i = A_i +a_i, \quad \mathcal{B}_k = B_k+b_k +\frac{1}{2} \sum_{1\le i <j \le r} \alpha_k^{ij}(A_ia_j-a_iA_j).
\]
The contact forms defining the horizontal bundle of $(\R^{r+s},\star)$ are given by
\[
\omega^k = dB_k - \frac{1}{2}\sum_{i=1}^r \left( \sum_{j< i }\alpha_k^{ji} A_j - \sum_{j> i} \alpha_k^{ij}A_j\right) dA_i, \quad k=1,\ldots ,r.
\]
Here, the constants $\alpha_k^{ij} $ come from the bracket relations on $\mathfrak{g}_1$.

The goal of this section will be to prove Theorem \ref{step-2-3-main-theorem} by proving a result similar to Proposition \ref{mf-diff-prop}. 
Theorem \ref{step-2-3-main-theorem} will then follow from this result,  in the same way that Theorem \ref{mf-main-theorem} followed from Proposition \ref{mf-diff-prop}.
We show the following:

\begin{lemma}\label{step-2-hor-lemma}
Fix a step two Carnot group $G$ and an open set $\Omega\subseteq \R^n$. Let
$f=(f^{A_1}, \ldots , f^{A_r},f^{B_1},\ldots , f^{B_s}):\Omega\to G$ be of class $C^{0,\frac{1}{2}+}$, where $f^{A_1},\ldots , f^{A_r}$ are the horizontal components of $f$.  If each  $f^{A_i}$ is differentiable at a point $x_0\in \Omega$,
then $f$ is differentiable at $x_0$ with the image of $df_{x_0}$ contained  in $H_{f(x_0)}G$.
\end{lemma}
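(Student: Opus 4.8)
The plan is to mimic the structure of Proposition \ref{mf-diff-prop}, using the explicit formula for $\star$ in coordinates of the first kind together with Corollary \ref{carnot-bb-cor} and the modulating homeomorphism $\beta$. First I would fix a homeomorphism $\beta:[0,\infty)\to[0,\infty)$ realizing the $C^{0,\frac12+}$ estimate for $f$, and compute the second-layer components of $f(x_0)^{-1}\star f(x)$. Using $\mathcal{B}_k = B_k + b_k + \tfrac12\sum_{i<j}\alpha_k^{ij}(A_ia_j-a_iA_j)$ and that $(A_i,B_k)^{-1}=(-A_i,-B_k)$, one obtains
\[
(f(x_0)^{-1}\star f(x))_{B_k} = f^{B_k}(x) - f^{B_k}(x_0) - \tfrac12\sum_{1\le i<j\le r}\alpha_k^{ij}\bigl(f^{A_i}(x_0)f^{A_j}(x) - f^{A_j}(x_0)f^{A_i}(x)\bigr).
\]
Corollary \ref{carnot-bb-cor} applied to the point $f(x_0)^{-1}\star f(x)$, whose cc-distance to the origin equals $d_{cc}(f(x_0),f(x))\le \beta(|x-x_0|)|x-x_0|^{1/2}$ by left-invariance, gives
\[
\Bigl|f^{B_k}(x) - f^{B_k}(x_0) - \tfrac12\sum_{i<j}\alpha_k^{ij}\bigl(\Ai\aj - \Aj\ai\bigr)\Bigr| \le C^2\,\beta(|x-x_0|)^2\,|x-x_0| = o(|x-x_0|).
\]

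Next I would show this estimate, combined with the differentiability of each $f^{A_i}$ at $x_0$, forces $f^{B_k}$ to be differentiable at $x_0$ with the expected derivative. Write $f^{A_i}(x) = \Ai + df^{A_i}_{x_0}(x-x_0) + o(|x-x_0|)$; substituting into the bilinear antisymmetric combination and expanding, the cross terms of the form $\Ai\cdot df^{A_j}_{x_0}(x-x_0)$ are linear in $x-x_0$, the constant terms cancel (by antisymmetry $\Ai\Aj - \Aj\Ai = 0$), and the products of two increments or of an increment with an $o(|x-x_0|)$ term are $o(|x-x_0|)$. Hence
\[
f^{B_k}(x) - f^{B_k}(x_0) = L_k(x-x_0) + o(|x-x_0|),
\]
where $L_k(v) = \tfrac12\sum_{i<j}\alpha_k^{ij}\bigl(\Ai\, df^{A_j}_{x_0}(v) - \Aj\, df^{A_i}_{x_0}(v)\bigr)$ is linear. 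This proves $f^{B_k}$ is differentiable at $x_0$ with $df^{B_k}_{x_0} = L_k$. Since all components of $f$ are then differentiable at $x_0$, $f$ itself is differentiable there.

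Finally I would check the horizontality, i.e. that $\mathrm{im}\,df_{x_0}\subset H_{f(x_0)}G$. By the contact-form description, a tangent vector $v$ at $p=f(x_0)$ lies in $H_pG$ iff $\omega^k_p(v)=0$ for all $k$, where $\omega^k = dB_k - \tfrac12\sum_i\bigl(\sum_{j<i}\alpha_k^{ji}A_j - \sum_{j>i}\alpha_k^{ij}A_j\bigr)dA_i$. Applying $\omega^k_{f(x_0)}$ to $df_{x_0}(v)$ and using $df^{B_k}_{x_0}(v)=L_k(v)$ together with $df^{A_i}_{x_0}(v)$, the claim reduces to the algebraic identity
\[
\tfrac12\sum_{i<j}\alpha_k^{ij}\bigl(\Ai\, df^{A_j}_{x_0}(v) - \Aj\, df^{A_i}_{x_0}(v)\bigr) = \tfrac12\sum_{i}\Bigl(\sum_{j<i}\alpha_k^{ji}\Aj - \sum_{j>i}\alpha_k^{ij}\Aj\Bigr)df^{A_i}_{x_0}(v),
\]
which holds by reindexing and the antisymmetry $\alpha_k^{ij}=-\alpha_k^{ji}$. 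Thus $\omega^k_{f(x_0)}(df_{x_0}(v))=0$ for every $v$ and every $k$, so the image of $df_{x_0}$ is horizontal.

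The routine parts are the coordinate computation of $f(x_0)^{-1}\star f(x)$ and the reindexing identity for the contact forms; the only genuinely delicate point is confirming that every error term produced by substituting the first-order expansions of the $f^{A_i}$ into the quadratic expression is truly $o(|x-x_0|)$ — in particular that the ``$\beta^2|x-x_0|$'' term from Corollary \ref{carnot-bb-cor} is $o(|x-x_0|)$ (which it is, since $\beta(t)\to 0$ as $t\to 0$) and that products of bounded increments with $o(\cdot)$ terms behave as expected. I expect this bookkeeping, rather than any conceptual difficulty, to be the main obstacle, exactly as in the $J^k(\R)$ case.
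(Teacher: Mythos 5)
Your proposal is correct and follows essentially the same route as the paper: compute the second-layer components of $f(x_0)^{-1}\star f(x)$ from the first-kind group law, bound them by $C^2\beta(|x-x_0|)^2|x-x_0|=o(|x-x_0|)$ via Corollary \ref{carnot-bb-cor} and left-invariance, and then use the differentiability of the $f^{A_i}$ to identify $df^{B_k}_{x_0}$. The only (cosmetic) difference is that the paper writes the candidate derivative directly in the contact-form coefficients $\frac{1}{2}\bigl(\sum_{j<i}\alpha_k^{ji}f^{A_j}(x_0)-\sum_{j>i}\alpha_k^{ij}f^{A_j}(x_0)\bigr)$ so horizontality is immediate, whereas you obtain the $\sum_{i<j}$ form and verify horizontality by the reindexing/antisymmetry identity at the end.
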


\begin{proof}
We need to show for all $k$, the component   $f^{B_k}$ is differentiable at $x_0$ with
\[
df^{B_k}_{x_0} = \frac{1}{2} \sum_{i=1}^r \left( \sum_{j<i}\alpha_k^{ji} f^{A_j}(x_0)- \sum_{j> i} \alpha_k^{ij}f^{A_j}(x_0)\right)df^{A_i}_{x_0}.
\]

Fix $k$. 
By Corollary \ref{carnot-bb-cor}, there exists a constant $C$ such that 
\begin{equation}
|f^{B_k}(x)- f^{B_k}(x_0) - \frac{1}{2}\sum_{1\le i <j \le r} \alpha_k^{ij} (f^{A_i}(x_0) f^{A_j}(x)-f^{A_i}(x)f^{A_j}(x_0))|\le
Cd_{cc}(f(x), f(x_0)) ^2\label{eq-step-2}
\end{equation}
for all $x\in \Omega$.

Choose a function $\beta$ so that (\ref{calpha-ineq}) holds for $f$.
From \eqref{eq-step-2},
\[
|f^{B_k}(x)- f^{B_k}(x_0) -\frac{1}{2} \sum_{1\le i <j \le r} \alpha_k^{ij} (f^{A_i}(x_0) f^{A_j}(x)-f^{A_i}(x)f^{A_j}(x_0))|\le 
\beta^2(|x-x_0|) \cdot |x-x_0|,
\]
absorbing a constant  into $\beta$. 
Thus,
\begin{align*}
&\left|f^{B_k}(x) - f^{B_k}(x_0) - \frac{1}{2} \sum_{i=1}^r \left( \sum_{j<i}\alpha_k^{ji} f^{A_j}(x_0) - \sum_{j> i} \alpha_k^{ij}f^{A_j}(x_0)\right)df^{A_i}_{x_0}(x-x_0)\right|\\
&\qquad  \le \beta^2(|x-x_0|)\cdot |x-x_0|\\
&\qquad \qquad+ \frac{1}{2}\left| \sum_{i<j} \alpha_k^{ij} (f^{A_i}(x_0)f^{A_j}(x) - f^{A_i}(x_0)f^{A_j}(x_0))- 
\alpha_k^{ij} f^{A_i}(x_0) df_{x_0}^{A_j}(x-x_0)\right|\\
&\qquad \qquad +\frac{1}{2}\left| \sum_{i<j} \alpha_k^{ij} (f^{A_i}(x)f^{A_j}(x_0) - f^{A_i}(x_0)f^{A_j}(x_0))- 
\alpha_k^{ij} f^{A_j}(x_0) df_{x_0}^{A_i}(x-x_0)\right|\\
&\qquad =o(|x-x_0|),
\end{align*}
where we used the differentiability of each $f^{A_i}$ at $x_0$ for the last estimate.
\end{proof}

Theorem \ref{step-2-3-main-theorem} for step two Carnot groups now follows from Lemma \ref{step-2-hor-lemma},  using the same reasoning as in the proof of  Theorem \ref{mf-main-theorem}.

\begin{remark}
Lemma \ref{step-2-hor-lemma} was proven in the case $G=\mathbb{H}^n$ by Balogh, Haj{\l}asz, and Wildrick \cite[Proposition 8.1]{WCE:BHW}. 
The proof of Lemma \ref{step-2-hor-lemma} above was directly obtained from their proof by taking into account structural constants.  
\end{remark}


\subsection{Result for step three Carnot groups}

In this section, we will prove Theorem \ref{step-2-3-main-theorem} for step three Carnot groups using similar reasoning as in  subsection \ref{step-2-results-section}. We begin by reviewing notation:

Let $G$ be a step $3$ Carnot group. We identify $G$ with $\R^{r+s+t}$ equipped with an operation arising from coordinates of the first kind:
\[
(A_i,B_k,C_m)\star (a_i,b_k,c_m) = (\mathcal{A}_i,\mathcal{B}_k,\mathcal{C}_m)
\]
where
\[
\mathcal{A}_i = A_i+a_i, \qquad
\mathcal{B}_k = B_k+b_k + \frac{1}{2} \sum_{i<j} \alpha_k^{ij}  (A_ia_j-a_iA_j), 
\]
\[
\mathcal{C}_m = C_m + c_m +\frac{1}{2} \sum_{i,j} \beta_m^{ij}(A_ib_j -B_ja_i) +\frac{1}{12} \sum_{l=1}^r \sum_{k=1}^s \sum_{i<j} (A_l-a_l) \alpha_k^{ij} (A_ia_j-a_iA_j)\beta_{m}^{lk} .
\]
The $1$-forms defining $HG$ are given by 
\begin{align*}
&\omega^k_1 :=dB_k - \sum_{i=1}^r\frac{1}{2} \left(  \sum_{j<i} \alpha_k^{ji} A_i -\sum_{j>i} \alpha_k ^{ij} A_j\right)  dA_i,\\
&\omega^m_2:= dC_m - \sum_{i=1}^r \left[ -\frac{1}{2} \sum_{j=1}^s B_j \beta_m^{ij} +\frac{1}{12} \sum_{l=1}^r 
\sum_{k=1}^s A_l (\sum_{j<i} \alpha_k^{ji} A_j - \sum_{j>i} \alpha_k^{ij}A_j )\beta_m^{lk} \right] dA_i.
\end{align*}

As in Section \ref{step-2-results-section}, to prove Theorem \ref{step-2-3-main-theorem} for step three Carnot groups, it suffices to prove the following:

\begin{lemma}\label{step-3-hor-lemma}
Fix a step three Carnot group $G$ and an open set $\Omega\subseteq \R^n$.
Let $f=(f^{A_1}, \ldots , f^{A_r},f^{B_1},\ldots , f^{B_s}, f^{C_1},\ldots , f^{C_t}):\Omega\to G$ be given of class $C^{0,\frac{1}{2}+}$, where $f^{A_1},\ldots , f^{A_r}$ are the horizontal components of $f$.  If each  $f^{A_i}$ is differentiable at  a point $x_0\in \Omega$,
then $f$ is differentiable at $x_0$ with the image of $df_{x_0} $ lying in $ H_{f(x_0)}G$.
\end{lemma}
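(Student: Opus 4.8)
The plan is to mimic the structure of the proof of Lemma~\ref{step-2-hor-lemma}, but in two stages corresponding to the two layers $\mathfrak{g}_2$ and $\mathfrak{g}_3$ of the stratification. First I would fix $x_0\in\Omega$ at which each horizontal component $f^{A_i}$ is differentiable, choose a modulating homeomorphism $\beta$ for $f$ satisfying \eqref{calpha-ineq}, and write down the two families of coordinate expressions for $f(x_0)^{-1}\star f(x)$ obtained from \eqref{step-3-ops}: the $\tilde{\mathcal{B}}_k$-coordinates are quadratic in the $A$- and $B$-variables, and the $\tilde{\mathcal{C}}_m$-coordinates are (at worst) cubic. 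Applying Corollary~\ref{carnot-bb-cor} to the point $f(x_0)^{-1}\star f(x)$ gives, after absorbing constants into $\beta$,
\[
\bigl|(\,f(x_0)^{-1}\star f(x)\,)_{B_k}\bigr|\le \beta^2(|x-x_0|)\,|x-x_0|,\qquad
\bigl|(\,f(x_0)^{-1}\star f(x)\,)_{C_m}\bigr|\le \beta^3(|x-x_0|)\,|x-x_0|^{3/2},
\]
so both right-hand sides are $o(|x-x_0|)$.

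The first stage is essentially Lemma~\ref{step-2-hor-lemma} verbatim: from the $B_k$-estimate and the differentiability of the $f^{A_i}$ at $x_0$, the cross terms $f^{A_i}(x_0)f^{A_j}(x)-f^{A_i}(x)f^{A_j}(x_0)$ can be replaced by their linearizations with an $o(|x-x_0|)$ error, yielding that each $f^{B_k}$ is differentiable at $x_0$ with
\[
df^{B_k}_{x_0}=\tfrac12\sum_{i=1}^r\Bigl(\sum_{j<i}\alpha_k^{ji}f^{A_j}(x_0)-\sum_{j>i}\alpha_k^{ij}f^{A_j}(x_0)\Bigr)df^{A_i}_{x_0},
\]
which is exactly the condition $(\omega_1^k)_{f(x_0)}(df_{x_0})=0$.

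The second stage handles $f^{C_m}$. Now I would start from the $C_m$-coordinate of $f(x_0)^{-1}\star f(x)$, namely
\[
f^{C_m}(x)-f^{C_m}(x_0)-\tfrac12\sum_{i,j}\beta_m^{ij}\bigl(f^{A_i}(x_0)f^{B_j}(x)-f^{B_j}(x_0)f^{A_i}(x)\bigr)+\tfrac1{12}\sum_{l,k}\sum_{i<j}\bigl(f^{A_l}(x_0)+f^{A_l}(x)\bigr)\alpha_k^{ij}\bigl(f^{A_i}(x_0)f^{A_j}(x)-f^{A_i}(x)f^{A_j}(x_0)\bigr),
\]
which by the above is $o(|x-x_0|)$. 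I then want to show that, after substituting the first-stage information, this expression differs from $f^{C_m}(x)-f^{C_m}(x_0)-(\omega_2^m\text{-linear term})(x-x_0)$ by $o(|x-x_0|)$. The key input is that the $f^{B_j}$ are now known to be differentiable at $x_0$ with derivatives linear in the $df^{A_i}_{x_0}$, so each term $f^{A_i}(x_0)f^{B_j}(x)-f^{B_j}(x_0)f^{A_i}(x)$ linearizes with error $o(|x-x_0|)$; meanwhile in the degree-three sum one factor $f^{A_l}(x)-f^{A_l}(x_0)=O(|x-x_0|)$ and the remaining difference $f^{A_i}(x_0)f^{A_j}(x)-f^{A_i}(x)f^{A_j}(x_0)=O(|x-x_0|)$ as well, so those mixed terms contribute $O(|x-x_0|^2)=o(|x-x_0|)$, while the surviving piece $2f^{A_l}(x_0)\alpha_k^{ij}(\cdots)$ again linearizes. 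Collecting terms and using the differentiability of $f^{A_i}$ and $f^{B_j}$ at $x_0$ gives that $f^{C_m}$ is differentiable at $x_0$ with $df^{C_m}_{x_0}$ equal to the coefficient appearing in $\omega_2^m$, i.e. $(\omega_2^m)_{f(x_0)}(df_{x_0})=0$. Combined with the first stage, $\mathrm{im}\,df_{x_0}\subset\bigcap_k\ker\omega_1^k\cap\bigcap_m\ker\omega_2^m=H_{f(x_0)}G$.

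The main obstacle I anticipate is purely bookkeeping: keeping track, in the cubic $C_m$-term, of exactly which products vanish to second order at $x_0$ (so that they are genuinely negligible) versus which must be carefully linearized using the first-stage derivative formulas, and checking that the leftover linear coefficients reassemble precisely into the coefficient of $dA_i$ in $\omega_2^m$ — in particular that the structural-constant identity $\sum_k\alpha_k^{cd}\beta_m^{jk}=\sum_k\alpha_k^{jd}\beta_m^{ck}-\sum_k\alpha_k^{jc}\beta_m^{dk}$ (which follows from the Jacobi identity) is what makes the bracket relations, and hence the contact-form coefficients, consistent. There is no conceptual difficulty beyond what is already present in the step-two case; the estimates are driven entirely by Corollary~\ref{carnot-bb-cor}, the $C^{0,\frac12+}$ hypothesis, and Rademacher-type differentiability of the horizontal components.
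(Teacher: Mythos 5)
Your proposal is correct and follows essentially the same route as the paper's proof: bound the second- and third-layer coordinates of $f(x_0)^{-1}\star f(x)$ via Corollary \ref{carnot-bb-cor} applied to \eqref{step-3-ops}, obtain differentiability of the $f^{B_k}$ from the step-two argument, then linearize the $C_m$-coordinate, where the coefficients $\tfrac14-\tfrac16=\tfrac1{12}$ do reassemble into the coefficient of $dA_i$ in $\omega_2^m$. Two minor remarks: the Jacobi-identity relation among the structural constants is not actually needed in this estimate (it only enters in verifying that the left-invariant frame satisfies the expected bracket relations), and where you linearize $f^{B_j}(x)-f^{B_j}(x_0)$ using $df^{B_j}_{x_0}$ directly, the paper instead substitutes the quadratic expression from \eqref{step-3-bounds}; the two are equivalent.
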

 
\begin{proof}
By the proof of Lemma \ref{step-2-hor-lemma}, each component $f^{B_k}$ is differentiable at $x_0$ with
\[
df^{B_k}_{x_0} = \frac{1}{2} \sum_{i} \left( \sum_{j<i}\alpha_k^{ji} f^{A_j}(x_0)- \sum_{j> i} \alpha_k^{ij}f^{A_j}(x_0)\right)df^{A_i}_{x_0}.
\]

It remains to show that  each component $f^{C_m}$ is differentiable at $x_0$ with
\[
df^{C_m}_{x_0}  = \sum_{i}\left[ -\frac{1}{2} \sum_{j} \beta_m^{ij}f^{B_j}(x_0) +\frac{1}{12} \sum_{l,k}  f^{A_l}(x_0)\left(\sum_{j<i} \alpha_k^{ji} f^{A_j}(x_0)- \sum_{j>i} \alpha_k^{ij}f^{A_j}(x_0)\right)\beta_m^{lk} \right] df^{A_i}_{x_0}.
\]

Fix $m$.
Choose   $\beta$ so that (\ref{calpha-ineq}) holds.   
By the calculations in (\ref{step-3-ops}) and Corollary \ref{carnot-bb-cor}, we have 
\begin{align}
&\nonumber |f^{C_m}(x)-f^{C_m}(x_0)-\frac{1}{2} \sum_{i,j} \beta_m^{ij}(f^{A_i}(x_0)f^{B_j}(x) -f^{B_j}(x_0)f^{A_i}(x)) +\frac{1}{12} \sum_{l,k} \sum_{i<j} (f^{A_l}(x_0) \\ &\nonumber \qquad +f^{A_l}(x)) \alpha_k^{ij} (f^{A_i}(x_0)f^{A_j}(x)-f^{A_i}(x)f^{A_j}(x_0))\beta_{m}^{lk} |^{1/3}\le \beta(|x-x_0|) \cdot |x-x_0|^{1/2},\\
&| f^{B_k}(x)-f^{B_k}(x_0)- \frac{1}{2} \sum_{i<j} \alpha_k^{ij}  (f^{A_i}(x_0)f^{A_j}(x)-f^{A_i}(x)f^{A_j}(x_0))|^{1/2} \le \beta(|x-x_0|)\cdot |x-x_0|^{1/2}  \label{step-3-bounds}
\end{align}
for each $k$, absorbing constants into $\beta$.

From (\ref{step-3-bounds}), we have 
\begin{align*}
&\biggl|f^{C_m}(x)-f^{C_m}(x_0)-  \sum_{i} \biggl[ -\frac{1}{2} \sum_{j} \beta_m^{ij}f^{B_j}(x_0)\\ & \qquad +\frac{1}{12} \sum_{l,k}  f^{A_l}(x_0)\biggl(\sum_{j<i} \alpha_k^{ji} f^{A_j}(x_0)- \sum_{j>i} \alpha_k^{ij}f^{A_j}(x_0)\biggr)\beta_m^{lk} \biggr] df^{A_i}_{x_0}(x-x_0)\biggr|\\
&\le \beta(|x-x_0|)^3 \cdot |x-x_0|^{3/2} \\
&+ \biggl| \frac{1}{2} \sum_{i,j} \beta_m^{ij}(f^{A_i}(x_0)f^{B_j}(x) - f^{B_j}(x_0)f^{A_i}(x))-\frac{1}{12} \sum_{l,k}  \sum_{i<j} \biggl[(f^{A_l}(x_0) +f^{A_l}(x)) \cdot \\& \qquad\alpha_k^{ij} (f^{A_i}(x_0)f^{A_j}(x)-f^{A_i}(x)f^{A_j}(x_0))\beta_{m}^{lk}\biggr]+\frac{1}{2}\sum_{i,j} \beta_m^{ij}f^{B_j}(x_0)df^{A_i}_{x_0}(x-x_0)\\ &\qquad-\frac{1}{12} \sum_{l=1}^r    \sum_{k=1}^s f^{A_l}(x_0)\biggl(\sum_{j<i} \alpha_k^{ji} f^{A_j}(x_0)- \sum_{j>i} \alpha_k^{ij}f^{A_j}(x_0)\biggr)\beta_m^{lk} df^{A_i}_{x_0}(x-x_0)\biggr|\\
&\le \beta(|x-x_0|)^3\cdot |x-x_0|^{3/2} \\ &+\left| -\frac{1}{2}\sum_{i,j} \beta_m^{ij} f^{B_j}(x_0)f^{A_i}(x) + \frac{1}{2} \sum_{i,j} \beta_m^{ij} f^{B_j}(x_0)f^{A_i}(x_0) + 
 \frac{1}{2}\sum_{i,j}\beta_m^{ij}f^{B_j}(x_0)df^{A_i}_{x_0}(x-x_0)\right|\\
&+ \biggl|\frac{1}{12} \sum_{l,k}\sum_{i<j} (f^{A_l}(x_0) +f^{A_l}(x)) \alpha_k^{ij}(f^{A_i}(x)f^{A_j}(x_0) -
f^{A_i}(x_0)f^{A_j}(x)) \beta_{m}^{lk} \\& \qquad+ \frac{1}{2}\sum_{i,j} \beta_m^{ij} f^{A_i}(x_0)(f^{B_j}(x) -f^{B_j}(x_0))\\
&\qquad -\frac{1}{12} \sum_{i,l,k} f^{A_l}(x_0) \biggl( \sum_{j<i} \alpha_k^{ji} f^{A_j}(x_0) - \sum_{j>i}
\alpha_k^{ij} f^{A_j}(x_0)\biggr) \beta_m^{lk} df_{x_0}^{A_i} (x-x_0)\biggr|,
\end{align*}
where we regrouped terms in the second inequality. 
As the $f^{A_i}$ are differentiable at $x_0$, we can then  estimate
\begin{align*}
&\biggl|f^{C_m}(x)-f^{C_m}(x_0)-  \sum_{i} \biggl[ -\frac{1}{2} \sum_{j} \beta_m^{ij}f^{B_j}(x_0)\\ & \qquad +\frac{1}{12} \sum_{l,k}  f^{A_l}(x_0)\biggl(\sum_{j<i} \alpha_k^{ji} f^{A_j}(x_0)- \sum_{j>i} \alpha_k^{ij}f^{A_j}(x_0)\biggr)\beta_m^{lk} \biggr] df^{A_i}_{x_0}(x-x_0)\biggr|\\
& \le\beta(|x-x_0|)^3 \cdot |x-x_0|^{3/2} + o(|x-x_0|) \\
&+ \biggl|  -\frac{1}{12} \sum_{l,k}\sum_{i<j} \Al \alpha_k^{ij} \ai \Aj \bmlk 
+\frac{1}{12} \sum_{l,k} \sum_{i<j} \Al \akij \Ai\Aj\bmlk \\
&\qquad + \frac{1}{12} \sum_{l,k}\sum_{i<j} \Al\akij\Aj\bmlk df_{x_0}^{A_i} (x-x_0)\biggr|\\
&+ \biggl| \frac{1}{12} \sum_{l,k} \sum_{j<i}\Al\Aj\akji\ai\bmlk -\frac{1}{12} \sum_{l,k} \sum_{j<i} \Al\Ai\akji\Aj\bmlk \\
&\qquad -\frac{1}{12}\sum_{l,k} \sum_{j<i} \Al\akji\Aj\bmlk df_{x_0}^{A_i}(x-x_0)\biggr|\\
& + \biggl|\frac{1}{6} \sum_{l,k} \sum_{i<j} \Al\akij \Aj\bmlk -\frac{1}{6}\sum_{l,k}\sum_{j<i}\Al\Aj\akji\ai\bmlk
\\ &\qquad + \frac{1}{12}\sum_{l,k} \sum_{i<j} \al\akij (\ai\Aj-\Ai\aj)\bmlk \\&\qquad + \frac{1}{2} \sum_{i,j} \bmij \Ai\bj -\frac{1}{2}\sum_{i,j} \bmij\Bj\Ai \biggr|\\
&\le o(|x-x_0|) \\&+  \biggl|\frac{1}{2} \sum_{l,k} \bmlk \Al\bk - \frac{1}{2} \sum_{l,k} \bmlk\Al\Bk  \\&\qquad + \frac{1}{4} \sum_{l,k} \bmlk\Al \sum_{i<j} \akij 
(\ai\Aj-\aj\Ai)\biggr|\\
&+\biggl| -\frac{1}{12} \sum_{l,k} \bmlk \Al \sum_{i<j}\akij(\ai\Aj-\aj\Ai)\\
&\qquad +\frac{1}{12}\sum_{l,k} \sum_{i<j} \bmlk\al\akij(\ai\Aj-\Ai\aj)\biggr|,
\end{align*}
where we used the differentiability of the $f^{A_i}$ for both inequalities. 
Since the second term in the last expression is bounded by $|x-x_0|\cdot \beta(|x-x_0|)^2$ by  (\ref{step-3-bounds}) (up to a constant factor depending only on $G$),
we can bound the last expression by 
\begin{align*}
&o(|x-x_0|)  + \frac{1}{12} \biggl| \sum_{l,k} \bmlk(\al-\Al )\sum_{i<j} \akij (\ai\Aj-\aj\Ai)\biggr|\\
&\le o(|x-x_0|)+\frac{1}{12} \biggl| \sum_{l,k} \bmlk (\al-\Al ) \sum_{i<j} \akij (\ai\Aj-\Ai\Aj)\biggr|\\
&\qquad+ \frac{1}{12} \biggl| \sum_{l,k} \bmlk (\al-\Al)\sum_{i<j} \akij (\Ai\Aj-\aj\Ai)\biggr|\\
&\le o(|x-x_0|) + \frac{1}{12} \sum_{l,k} \sum_{i<j} |\bmlk\akij \Aj| \cdot |(\al-\Al)(\ai-\Ai)|  \\
&\qquad +\frac{1}{12} \sum_{l,k}\sum_{i<j} |\bmlk\akij \Ai|\cdot | (\al-\Al)(\aj-\Aj)|\\
&=o(|x-x_0|),
\end{align*}
where  for the last equality we noted  
\[
\ai-\Ai= df_{x_0}^{A_i} (x-x_0) +o(|x-x_0|)
=O(|x-x_0|)
\]
for all $i$. 
This proves that each component $f^{C_m}$ is differentiable at $x_0$ with $df^{C_m}_{x_0}$  of the desired form. The 
lemma follows.
\end{proof}

Theorem \ref{step-2-3-main-theorem}  follows for step three Carnot groups from initial remarks.

\section{Future work}

We would like to generalize Theorems \ref{mf-main-theorem} and  \ref{step-2-3-main-theorem} to all Carnot groups. By the work in this paper, it would suffice to prove a result of the form:\\

\noindent \emph{Let $(\R^n,\cdot)$ be a Carnot group and $\Omega\subseteq \R^k$ an open subset. 
Suppose $f:\Omega\to (\R^n,\cdot)$ is of class $C^{0,\frac{1}{2}+}(\Omega,(\R^n,\cdot))$. 
If each of the components of $f$ are differentiable at a point $x_0\in \Omega$, then the image of $df_{x_0}$ lies in $H_{f(x_0)}(\R^n,\cdot)$.}\\


 \noindent Lemmas \ref{step-2-hor-lemma} and  \ref{step-3-hor-lemma} were first proved for model filiform groups and then proved in general by repeating calculations with additional structural constants.
If ones attempts this strategy for higher step Carnot groups, one could run into issues.
For example, there may be a nontrivial bracket relation of elements in the second layer of a general stratification, while such  a relation for a model filiform group must be trivial.
In addition, proving weak contactness (Lemmas \ref{step-2-hor-lemma} and \ref{step-3-hor-lemma}) became much more computationally difficult as one moved from the step two case to the step three case; one would expect this increasing difficulty to continue.
Thus generalizing  these two lemmas may require a  deeper understanding of the polynomials arising from the Baker-Campbell-Hausdorff formula.\\

\noindent \textbf{Acknowledgements.}
The author deeply thanks Jeremy Tyson for bringing this problem to his attention and  for many hours of discussion on the content and presentation of this paper.
The author also thanks Ben Warhurst for explaining why the map $\Phi$ that is used to define coordinates of the second kind for Carnot groups, is a diffeomorphism. 
Finally, we are grateful to the referee for their very careful reading of the paper and for suggesting which results should be cited rather than reproven. 

\bibliographystyle{plain}

\bibliography{variant_gromov_jung}

\end{document}